
\documentclass[runningheads]{llncs}

\usepackage{amsmath, amsfonts, amssymb, graphicx, xcolor, dsfont, enumerate, dcolumn,subcaption}
\usepackage[numbers,square, comma, compress]{natbib}
\usepackage{algorithm}
\usepackage{algorithmic}
\usepackage{makecell}
\usepackage{threeparttable}

\let\doendproof\endproof
\renewcommand\endproof{\hfill\qed\doendproof}

\usepackage{hyperref}
\usepackage{xurl}
\usepackage[capitalise]{cleveref}
\PassOptionsToPackage{hyphens}{url}\usepackage{hyperref}

\usepackage{tikz}


%


\newtheorem{thr}{Theorem}

\newtheorem{lem}{Lemma}
\newtheorem{prop}{Proposition}
\newtheorem{conj}{Conjecture}
\newtheorem{defi}{Definition}

\newtheorem{obs}{Observation}

\begin{document}

\title{Exhaustive generation of edge-girth-regular graphs}

%
\author{Jan Goedgebeur\inst{1,2}\orcidID{0000-0001-8984-2463} \and
Jorik Jooken\inst{1}\orcidID{0000-0002-5256-1921}}

\authorrunning{J. Goedgebeur and J. Jooken}

\institute{
Department of Computer Science, KU Leuven Kulak, 8500 Kortrijk, Belgium\\
\email{jan.goedgebeur@kuleuven.be,jorik.jooken@kuleuven.be}
\and
Department of Applied Mathematics, Computer Science and Statistics, Ghent University, 9000 Ghent, Belgium\\
}

\date{}

\maketitle

\begin{abstract}
Edge-girth-regular graphs (abbreviated as $egr$ graphs) are a class of highly regular graphs. More specifically, for integers $v$, $k$, $g$ and $\lambda$ an $egr(v,k,g,\lambda)$ graph is a $k$-regular graph with girth $g$ on $v$ vertices such that every edge is contained in exactly $\lambda$ cycles of length $g$. The central problem in this paper is determining $n(k,g,\lambda)$, which is defined as the smallest integer $v$ such that an $egr(v,k,g,\lambda)$ graph exists (or $\infty$ if no such graph exists) as well as determining the corresponding extremal graphs. We propose a linear time algorithm for computing how often an edge is contained in a cycle of length $g$, given a graph with girth $g$. We use this as one of the building blocks to propose another algorithm that can exhaustively generate all $egr(v,k,g,\lambda)$ graphs for fixed parameters $v, k, g$ and $\lambda$. We implement this algorithm and use it in a large-scale computation to obtain several new extremal graphs and improvements for lower and upper bounds from the literature for $n(k,g,\lambda)$. Among others, we show that $n(3,6,2)=24, n(3,8,8)=40, n(3,9,6)=60, n(3,9,8)=60, n(4,5,1)=30, n(4,6,9)=35, n(6,5,20)=42$ and we disprove a conjecture made by Araujo-Pardo and Leemans [\emph{Discrete Math.} \textbf{345}(10):112991 (2022)] for the cubic girth 8 and girth 12 cases. Based on our computations, we conjecture that $n(3,7,6)=n(3,8,10)=n(3,8,12)=n(3,8,14)=\infty.$ 
\keywords{Graph algorithms \and Extremal problems \and Edge-girth-regular graphs \and Degree sequences.}

%
\end{abstract}


\section{Introduction}
In 2018, Jajcay, Kiss and Miklavi\v{c}~\cite{JKM18} introduced a new type of regularity called edge-girth-regularity. They define for integers $v$, $k$, $g$ and $\lambda$ an edge-girth-regular $(v,k,g,\lambda)$ graph (abbreviated as an $egr(v,k,g,\lambda)$ graph) as a $k$-regular graph with girth $g$ on $v$ vertices such that every edge is contained in exactly $\lambda$ cycles of length $g$. Edge-girth-regular graphs are related to a number of important graph classes. More precisely, edge-girth-regular graphs generalize the well-known notion of edge-regular graphs~\cite{BCN89} (in which each edge appears in the same number of triangles) and they are related to Moore graphs and the notoriously difficult \textit{cage problem}~\cite{EJ08}. This problem asks to determine the order of the smallest $k$-regular graph with girth $g$ as well as the corresponding extremal graphs, known as \textit{cages} (we refer the interested reader to~\cite{EJ08} for a more thorough overview of this topic). Many cages are known to be edge-girth-regular graphs. Moreover, edge-transitive graphs are an important subclass of edge-girth-regular graphs.

Since a disconnected graph $G$ can only be edge-girth-regular if each of its connected components are edge-girth-regular, all graphs in the remainder of this paper refer to connected graphs without loops and parallel edges. For the cage problem, it is known that for all integers $k,g \geq 3$ there exist infinitely many $k$-regular graphs of girth $g$~\cite{S63}. For edge-girth-regular graphs on the other hand, there can either be zero, finitely many or infinitely many edge-girth-regular graphs depending on the choice of the parameters $k, g$ and $\lambda$~\cite{JKM18}. Analogous to the important cage problem, Drglin, Filipovski, Jajcay and Raiman~\cite{DFJR21} consider the class of extremal edge-girth-regular graphs, which are edge-girth-regular graphs of minimum order for given parameters $k, g$ and $\lambda$. They initiated the study of $n(k,g,\lambda)$, which is defined as the smallest integer $v$ such that an $egr(v,k,g,\lambda)$ graph exists (or $\infty$ if no such graph exists) and determine $n(k,g,\lambda)$ exactly for a number of parameter triples $(k,g,\lambda)$ with a focus on the 3-regular and 4-regular case. However, the exact value of $n(k,g,\lambda)$ is only known for a handful of cases and most focus in the literature so far has been on lower and upper bounds instead. In the current paper, we determine $n(k,g,\lambda)$ exactly for a number of open cases (as well as the corresponding extremal graphs) and improve several existing lower and upper bounds from the literature~\cite{DFJR21,EJ08,JKM18,P23}. 

We now turn our attention to some basic properties of edge-girth-regular graphs that will be used throughout the paper. For fixed parameters $k$ and $g$, there are only finitely many $\lambda$ such that an $egr(v,k,g,\lambda)$ graph exists for some integer $v$. More precisely, Jajcay, Kiss and Miklavi\v{c}~\cite{JKM18} showed the following elemental properties.

\begin{prop}[\cite{JKM18}]\label{prop:elementalProperties}
    Let $G$ be an $egr(v,k,g,\lambda)$ graph. Then the following hold:
    \begin{enumerate}[(i)]
    \item every vertex $u$ of $G$ is contained in exactly $\frac{k\lambda}{2}$ cycles of length $g$;
    \item there are $\frac{vk\lambda}{2g}$ cycles of length $g$ in $G$;
    \item $k\lambda$ is an even integer;
    \item if $g$ is even, then $\lambda \leq (k-1)^{\frac{g}{2}}$;
  \item if $g$ is odd, then $\lambda \leq (k-1)^{\frac{g-1}{2}}.$
    \end{enumerate}
\end{prop}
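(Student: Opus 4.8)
The plan is to establish the five items by elementary double counting for (i)--(iii) and by a Moore-type path-counting argument for (iv)--(v).

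For (i) I would fix a vertex $u$ and count in two ways the pairs $(e,C)$ where $e$ is an edge incident to $u$ and $C$ is a $g$-cycle containing $e$. Since every such cycle automatically passes through $u$ and each edge at $u$ lies in exactly $\lambda$ cycles of length $g$, counting by edges gives $k\lambda$; counting by cycles gives $2m$, where $m$ is the number of $g$-cycles through $u$, because each $g$-cycle uses exactly two of the edges at $u$. Equating yields $m=k\lambda/2$. For (ii) I would repeat the double count one level up on pairs $(u,C)$ with $u$ a vertex and $C$ a $g$-cycle through $u$: counting by vertices gives $v\cdot k\lambda/2$ via (i), while counting by cycles gives $gN$, since each $g$-cycle has exactly $g$ vertices, where $N$ is the total number of $g$-cycles; hence $N=vk\lambda/(2g)$. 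Item (iii) is then immediate, since the quantity $k\lambda/2$ from (i) counts cycles and is therefore a nonnegative integer, so $k\lambda$ is even.

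The substance of the proposition lies in (iv) and (v), where the girth is used to sharpen a naive $(k-1)^{g-1}$ path count to the bound $(k-1)^{\lfloor g/2\rfloor}$. Fix an edge $e=uw$ and observe that a $g$-cycle through $e$ is precisely $e$ together with a $u$--$w$ path of length $g-1$ avoiding $e$; write such a cycle as $v_0=u,\,v_1=w,\,v_2,\dots,v_{g-1}$. I would build this path greedily from $w$: the initial segment $v_1\to v_2\to\cdots\to v_{r+1}$, with $r=\lfloor g/2\rfloor$, uses $r$ edges, and each step admits at most $k-1$ choices (the first step cannot return to $u$ along $e$, and later steps cannot reuse the edge just traversed), giving at most $(k-1)^r$ initial segments. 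The key claim is that the remaining stretch from $v_{r+1}$ back to $u$ is then forced.

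The main obstacle, and the only place where the girth is essential, is establishing that uniqueness. The forced stretch must be a $v_{r+1}$--$u$ path of length $\ell:=g-1-r=\lceil g/2\rceil-1$, and I would prove that in a graph of girth $g$ there is at most one path of length at most $\ell$ between any two vertices: two distinct such paths have a nonempty symmetric difference of edges in which every vertex has even degree, hence contain a cycle, necessarily of length at most $2\ell$; since $2\ell=g-2$ for even $g$ and $2\ell=g-1$ for odd $g$, this length lies strictly below the girth, a contradiction. Consequently each of the at most $(k-1)^r$ initial segments extends in at most one way, so $\lambda\le(k-1)^r$, which is $(k-1)^{g/2}$ for even $g$ and $(k-1)^{(g-1)/2}$ for odd $g$. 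The only remaining care is bookkeeping in the split — checking that the $r$ chosen edges, the edge $e$, and the forced segment of length $\ell$ total $g$, and that $2\ell<g$ in both parities — which is routine.
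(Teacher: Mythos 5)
Your proof is correct: (i)--(iii) are the standard double counts, and your argument for (iv)--(v) --- at most $(k-1)^{\lfloor g/2\rfloor}$ initial segments of the cycle starting along $e$, with the return path forced because two distinct paths of total length less than $g$ between the same endpoints would yield a short cycle --- is exactly the Moore-tree counting that underlies the cited proof in [JKM18] (and Lemma~\ref{lem:subgraphs}(ii) here), merely phrased via walks rather than via leaves of $\mathcal{T}^{u_1}_{k,\frac{g-1}{2}}$ and edges between leaf sets. Note the paper itself only quotes this proposition from [JKM18] without reproving it, so there is no in-paper proof to diverge from.
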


A natural lower bound for the minimum order of any $k$-regular graph with finite girth $g$ is the Moore bound:
\[
    M(k,g) =
    \begin{cases}
        1+k+k(k-1)+\ldots+k(k-1)^{(g-3)/2}, & \text{$g$ odd,} \\
        2(1+(k-1)+\ldots+(k-1)^{(g-2)/2}), & \text{$g$ even.}
    \end{cases}
\]
Moore graphs are $k$-regular graphs with girth $g$ that attain this bound. We remark that all Moore graphs of even girth are edge-girth-regular~\cite{JFJ17}. In the case of edge-girth-regular graphs, Drglin, Filipovski, Jajcay and Raiman~\cite{DFJR21} gave the following improved lower bound, which will turn out to be sharp for a number of cases that we consider in the current paper.
\begin{thr}[Theorem 2.3 in \cite{DFJR21}]
Let $k$ and $g$ be a fixed pair of integers greater than or equal to 3, and let $\lambda \leq (k-1)^{\frac{g-1}{2}}$, when $g$ is odd, and $\lambda \leq (k-1)^{\frac{g}{2}}$, when $g$ is even. Then
\begin{align}\label{lowerBoundGOdd}
n(k,g,\lambda) \geq M(k,g)+(k-1)^{\frac{g-1}{2}}-\lambda\text{, for $g$ odd,}
\end{align}
and
\begin{align}\label{lowerBoundGEven}
n(k,g,\lambda) \geq M(k,g)+\left\lceil 2 \frac{(k-1)^{\frac{g}{2}}-\lambda}{k}\right\rceil\text{, for $g$ even.}
\end{align}
\end{thr}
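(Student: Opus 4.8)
The plan is to prove the two lower bounds by a counting argument based at a single edge, refining the standard Moore-bound tree argument to account for the girth-$g$ cycles through that edge. Fix an edge $uv$ of an $egr(v,k,g,\lambda)$ graph $G$. First I would build the breadth-first ``Moore tree'' rooted at this edge: since $G$ has girth $g$, the vertices within distance roughly $g/2$ of $u$ and $v$ must all be distinct, and counting them gives exactly the Moore bound $M(k,g)$ as the number of forced vertices. The point of the refinement is that a cycle of length $g$ through the edge $uv$ corresponds to a specific way the tree can ``close up'' at its outermost level, and the deficiency $(k-1)^{\frac{g-1}{2}}-\lambda$ (odd case) or its even-girth analogue measures how many of the potential outermost endpoints fail to participate in such a closing cycle, each of which forces at least one additional vertex beyond the Moore tree.

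For the odd case I would argue as follows. Consider the vertex $u$ and the tree of shortest paths emanating from it; at distance $\frac{g-1}{2}$ there are exactly $(k-1)^{\frac{g-1}{2}}$ vertices reachable through the neighbour $v$ (one branch of the tree). A cycle of length $g$ through $uv$ meets exactly at such an outermost vertex where the two halves of the cycle join. Since the edge $uv$ lies in exactly $\lambda$ such cycles, at most $\lambda$ of these $(k-1)^{\frac{g-1}{2}}$ outermost vertices are the meeting points of a girth cycle; the remaining $(k-1)^{\frac{g-1}{2}}-\lambda$ outermost vertices must have their ``missing'' adjacencies go to vertices lying strictly outside the Moore tree. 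A careful disjointness argument shows these contribute at least $(k-1)^{\frac{g-1}{2}}-\lambda$ genuinely new vertices, yielding $v \geq M(k,g)+(k-1)^{\frac{g-1}{2}}-\lambda$.

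For the even case the geometry is the same but the branching is symmetric about the central edge, so the outermost level has $(k-1)^{\frac{g}{2}}$ candidate vertices on each side and a girth cycle is now pinned down by an edge joining the two outer levels rather than a single vertex. Counting closing edges instead of closing vertices and dividing by the degree contribution $k$ (since each new vertex can absorb up to $k$ of the forced outgoing edges) is what produces the factor $\frac{2((k-1)^{g/2}-\lambda)}{k}$, and taking the ceiling reflects that the number of extra vertices is an integer. I would set up this edge-counting bookkeeping carefully, bounding how many of the $2(k-1)^{g/2}$ outer half-edges can be matched internally versus how many must reach fresh vertices.

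The main obstacle I expect is the disjointness/no-double-counting step: when I claim each deficient outermost vertex (or unmatched outer half-edge) forces a \emph{distinct} new vertex outside the Moore tree, I must rule out the possibility that two different deficiencies are ``absorbed'' by the same external vertex, and in the even case I must correctly account for the fact that a single new vertex of degree $k$ can simultaneously resolve several outgoing edges, which is exactly what the division by $k$ and the ceiling encode. Making this matching argument rigorous — precisely identifying which edges are forced out of the tree and why the targets cannot collapse below the stated count — is the delicate part; the rest is the routine Moore-tree enumeration.
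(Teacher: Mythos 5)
Your overall strategy --- grow the Moore tree from the chosen edge, observe that every girth cycle through that edge must ``close up'' at the outermost level of the tree, and charge the amount by which $\lambda$ falls short of its maximum to vertices outside the tree --- is exactly the argument of Drglin, Filipovski, Jajcay and Raiman that the paper is quoting; the paper itself does not reprove the theorem, it only distills the structural core into Lemma~1 (the tree $\mathcal{T}$ embeds in $G$, and each leaf class $L_i$ sends exactly $\lambda$ edges to the union of the other leaf classes). Your even case is essentially right. Your odd case, however, contains a genuine counting error. Rooting at $u$ and looking down the branch through $v$, the number of vertices at distance $\frac{g-1}{2}$ in that branch is $(k-1)^{\frac{g-3}{2}}$, not $(k-1)^{\frac{g-1}{2}}$; the latter is the number of \emph{tree-external half-edges} issuing from those leaves (each leaf retains $k-1$ of them). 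Moreover, for odd $g$ a girth cycle through $uv$ does not close by two paths meeting at a single outermost vertex: since the path complementing the edge $uv$ has even length $g-1$, the cycle closes via an \emph{edge} joining a leaf of the $v$-branch to a leaf of a different branch (meeting at a single antipodal vertex is the even-length picture). Consequently the step ``at most $\lambda$ of these outermost vertices are meeting points, and each remaining outermost vertex forces one new vertex'' is the wrong bookkeeping and does not produce the quantity $(k-1)^{\frac{g-1}{2}}-\lambda$.

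The repair is to run the odd case per half-edge, just as you do in the even case: of the $(k-1)^{\frac{g-1}{2}}$ half-edges leaving the leaves of one branch, exactly $\lambda$ land on leaves of other branches (this is the girth-cycle correspondence of Lemma~1(ii)); none can land on an internal tree vertex or on another leaf of the same branch without creating a cycle of length less than $g$; and no two of the remaining $(k-1)^{\frac{g-1}{2}}-\lambda$ can share an external endpoint, because two leaves of the same branch lie at distance at most $g-3$, so a common external neighbour would close a cycle of length at most $g-1$. This yields $(k-1)^{\frac{g-1}{2}}-\lambda$ \emph{distinct} new vertices with no division by $k$ --- and this disjointness fact, which fails for leaves hanging below the two different endpoints of the root edge in the even case, is precisely why the odd and even bounds have different shapes, a point your write-up leaves unexplained.
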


The rest of this paper is structured as follows: in~\cref{subsec:not&def} we introduce notation and definitions that will be used throughout the paper. In~\cref{sec:ngcAlgo} we propose a linear time algorithm -- based on a recursion relation -- for computing the number of cycles of length $g$ in which an edge is contained, given a graph of girth $g$. Next, in~\cref{sec:genAlgo} we propose an algorithm to exhaustively generate all $egr(v,k,g,\lambda)$ graphs, which uses the algorithm from~\cref{sec:ngcAlgo} as an important ingredient. In~\cref{sec:bounds} we use these algorithms in a large-scale computation amounting to 6 CPU-years in order to improve several lower and upper bounds on $n(k,g,\lambda)$ and independently verify several claims using different algorithms. We also highlight several interesting graphs that we discovered and briefly discuss their properties. Among others, we discuss two graphs that disprove a conjecture by Araujo-Pardo and Leemans~\cite{AL22} for the cases of cubic graphs with girths 8 and 12. Finally, in~\cref{sec:conc} we conclude this paper by discussing possible avenues for further research and we conjecture that $n(3,7,6)=n(3,8,10)=n(3,8,12)=n(3,8,14)=\infty$ based on our computations.

\subsection{Notation and definitions}\label{subsec:not&def}

For an integer $n \geq 1$ we denote by $[n]$ the set $\{1,\ldots,n\}$. The \textit{girth} $g$ of a graph $G$ is the length of the shortest cycle of $G$ (or $\infty$ if $G$ is acyclic). A \textit{girth cycle} of $G$ is a cycle with length $g$. We define $ngc(G,u_1u_2)$ as the number of pairwise distinct girth cycles in $G$ containing the edge $u_1u_2$. An \textit{edge-girth-regular} $(v,k,g,\lambda)$ graph $G$ is a $k$-regular graph of girth $g$ and order $v$ such that for every edge $u_1u_2$, we have $ngc(G,u_1u_2)=\lambda$. Such a graph will be abbreviated as an $egr(v,k,g,\lambda)$ graph. We define $n(k,g,\lambda)$ as the smallest value of $v$ for which an $egr(v,k,g,\lambda)$ graph exists (and define it as $\infty$ if no such graph exists). We define $nsp(G,u_1,u_2)$ and $d(G,u_1,u_2)$ as, respectively, the number of pairwise distinct shortest paths and the distance in $G$ between vertices $u_1$ and $u_2$. For an edge $u_1u_2$ from the graph $G=(V,E)$, we use $G-u_1u_2$ to denote the graph $(V,E\setminus\{u_1u_2\})$. The set $N_G(u)$ denotes the neighbors of $u$ in the graph $G$ and $deg_G(u)=|N_G(u)|$ denotes the degree of $u$ in the graph $G$.

\section{An algorithm for calculating $ngc(G,u_1u_2)$}\label{sec:ngcAlgo}

Let $G=(V,E)$ be a graph and $u_1u_2$ be an edge of $G$. We are interested in obtaining an efficient algorithm for calculating $ngc(G,u_1u_2)$, assuming that the girth $g$ is known (this is indeed the case for the application that we have in mind in~\cref{sec:genAlgo}). The most straightforward idea would consist of trying to efficiently generate all girth cycles of $G$ which contain $u_1u_2$ and then counting. However, the number of girth cycles can be exponential in terms of $g$ so this cannot be done efficiently in general. Instead, in this section we show that $ngc(G,u_1u_2)$ can be calculated with a time complexity of $O(|V|+|E|)$ based on a recursion relation that does not involve enumerating girth cycles.

We first observe a relationship between girth cycles and shortest paths.
\begin{obs}
Let $u_1u_2$ be an edge of the graph $G=(V,E)$ with girth $g$. Now $ngc(G,u_1u_2)=0$ if $d(G-u_1u_2,u_1,u_2)>g-1$ and $ngc(G,u_1u_2)=nsp(G-u_1u_2,u_1,u_2)$ otherwise.
\end{obs}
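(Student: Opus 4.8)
The plan is to establish a bijection between the two families of objects—girth cycles through $u_1u_2$ on one side, and shortest $u_1$–$u_2$ paths in $G-u_1u_2$ on the other—and to argue separately about the distance threshold that governs when this correspondence is nonempty. Throughout I would write $H = G - u_1u_2$ for brevity.

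\medskip

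First I would handle the threshold. A girth cycle containing the edge $u_1u_2$ consists of that edge together with a $u_1$–$u_2$ path of length exactly $g-1$ using the remaining edges, i.e.\ a path in $H$. Since $G$ has girth $g$, such a path must be \emph{internally disjoint} from a shorter route and, crucially, cannot be shortened: if there were a $u_1$–$u_2$ path in $H$ of length $\ell < g-1$, then together with $u_1u_2$ it would close a cycle in $G$ of length $\ell + 1 < g$, contradicting that the girth is $g$. Hence $d(H,u_1,u_2) \ge g-1$ always. This immediately gives the first case: if $d(H,u_1,u_2) > g-1$, then there is no $u_1$–$u_2$ path of length $g-1$ in $H$ at all, so no girth cycle through $u_1u_2$ exists and $ngc(G,u_1u_2)=0$.

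\medskip

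Next I would treat the remaining case $d(H,u_1,u_2) \le g-1$, which by the inequality just proved forces $d(H,u_1,u_2) = g-1$ exactly. Here the shortest $u_1$–$u_2$ paths in $H$ are precisely the $u_1$–$u_2$ paths of length $g-1$, and I claim each such path $P$ corresponds to exactly one girth cycle through $u_1u_2$, namely $P + u_1u_2$. I need to check this really is a cycle of length $g$ (no repeated internal vertices—automatic since $P$ is a path, and the closing edge $u_1u_2$ is not in $H$ so no edge is reused), that it is a girth cycle (length $g$ equals the girth), and that it contains $u_1u_2$. Conversely, deleting the edge $u_1u_2$ from any girth cycle through it yields a $u_1$–$u_2$ path in $H$ of length $g-1$, hence a shortest path. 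These two operations are mutually inverse, and I would note the map is injective because distinct paths yield distinct cycles (the cycles differ in their internal vertex/edge sets). This gives $ngc(G,u_1u_2) = nsp(H,u_1,u_2)$.

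\medskip

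The only genuinely delicate point is making sure the length-$(g-1)$ walks in question are honest simple paths and the resulting closed walks are genuine cycles, rather than objects with repeated vertices; this is where I would lean on the girth hypothesis. A shortest path is automatically simple, so that direction is clean; in the reverse direction a girth cycle is simple by definition, so deleting one edge leaves a simple path. Thus the bijection is really just the observation that the set of shortest paths in $H$ (all of common length $g-1$ in the relevant case) and the set of girth cycles through $u_1u_2$ are in one-to-one correspondence under adding or removing the edge $u_1u_2$. I expect no serious obstacle here—the main care is simply in ordering the length argument (establishing $d(H,u_1,u_2) \ge g-1$ unconditionally) before splitting into cases, so that the equality $d(H,u_1,u_2)=g-1$ in the second case is justified.
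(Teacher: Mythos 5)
Your proof is correct and is essentially the argument the paper implicitly relies on: the paper states this as an unproved observation, and the intended justification is exactly your bijection between girth cycles through $u_1u_2$ and shortest $u_1$--$u_2$ paths in $G-u_1u_2$, together with the girth-based inequality $d(G-u_1u_2,u_1,u_2)\ge g-1$ that collapses the second case to equality. Nothing is missing; your care about simplicity of the paths and cycles is the right (and only) delicate point.
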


Hence, we can instead focus on computing $d(G-u_1u_2,u_1,u_2)$ and $nsp(G-u_1u_2,u_1,u_2)$. Calculating the distance between two vertices is a classical problem that can be solved using breadth-first search and a small modification allows us to compute the number of shortest paths as well. This becomes clear in the following observation.

\begin{obs}
Let $D_i = \{u \in V(G)~|~d(G-u_1u_2,u,u_1)=i \}$ be the set of vertices at distance $i$ from $u_1$ in the graph $G-u_1u_2$. We now have the following base case (for $u=u_1$):
\begin{align*}
nsp(G-u_1u_2,u_1,u_1)=1.
\end{align*}
For a vertex $u \neq u_1$, let $d(G-u_1u_2,u,u_1)=i>0$. Since every shortest path of length $i$ can be obtained by adding an edge to a shortest path of length $i-1$, we have the following recursive case:
\begin{align*}
nsp(G-u_1u_2,u,u_1)=\sum_{u' \in N_{G-u_1u_2}(u) \cap D_{i-1}}nsp(G-u_1u_2,u',u_1).
\end{align*}
\end{obs}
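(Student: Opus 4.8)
The plan is to prove both parts directly, treating the recursive case by exhibiting a bijection between the shortest $u$--$u_1$ paths in $G-u_1u_2$ and a disjoint union of sets of shortest $u'$--$u_1$ paths, one for each admissible neighbor $u'$. For brevity I write $H = G-u_1u_2$ throughout. The base case is immediate: the unique path from $u_1$ to itself of length $0 = d(H,u_1,u_1)$ is the trivial one-vertex path, so $nsp(H,u_1,u_1)=1$.

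For the recursive case, fix $u \neq u_1$ with $d(H,u,u_1) = i > 0$. First I would show that if $P$ is any shortest $u$--$u_1$ path, then its penultimate vertex $u'$ (the unique neighbor of $u$ lying on $P$) satisfies $u' \in N_H(u)\cap D_{i-1}$: indeed, the portion $P-u$ of $P$ from $u'$ to $u_1$ is a path of length $i-1$, so $d(H,u',u_1)\leq i-1$, while $d(H,u',u_1)\geq i-1$ by the triangle inequality since $u'$ and $u$ are adjacent, forcing $d(H,u',u_1)=i-1$. Consequently $P-u$ is itself a shortest $u'$--$u_1$ path. This defines a map sending each shortest $u$--$u_1$ path $P$ to the pair consisting of its penultimate vertex $u'$ and the path $P-u$.

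Next I would check that this map is a bijection onto the set of all pairs $(u',Q)$ with $u'\in N_H(u)\cap D_{i-1}$ and $Q$ a shortest $u'$--$u_1$ path. Injectivity is clear, since $P$ is recovered by prepending the edge $uu'$. For surjectivity, given such a pair $(u',Q)$, prepending the edge $uu'$ to $Q$ yields a $u$--$u_1$ walk of length $i$; since every vertex of $Q$ lies at distance at most $i-1$ from $u_1$ whereas $u$ lies at distance $i$, the vertex $u$ cannot occur on $Q$, so the result is a genuine simple path, and it is a shortest path because its length equals $d(H,u,u_1)=i$. Because the sets of shortest $u'$--$u_1$ paths for distinct $u'$ are disjoint (such paths have distinct starting vertices), counting the pairs yields exactly $nsp(H,u,u_1)=\sum_{u'\in N_H(u)\cap D_{i-1}} nsp(H,u',u_1)$, which is the claimed recurrence.

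The main obstacle I anticipate is the surjectivity step, specifically guaranteeing that prepending $u$ to a shortest $u'$--$u_1$ path never reuses a vertex and hence produces a simple path rather than a mere walk. This is precisely where the distance-layer structure encoded by the sets $D_i$ is essential: it is the fact that $u$ sits one layer beyond every vertex of $Q$ that rules out any collision and certifies that the extended path is again shortest.
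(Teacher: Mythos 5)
Your proof is correct and follows the same route the paper takes: the paper justifies the recurrence in one sentence by noting that every shortest path of length $i$ arises by adding an edge to a shortest path of length $i-1$, and your bijection between shortest $u$--$u_1$ paths and pairs $(u',Q)$ with $u'\in N_{G-u_1u_2}(u)\cap D_{i-1}$ is exactly a careful expansion of that observation. The extra attention you give to surjectivity (using the layer structure to rule out vertex reuse) is a detail the paper leaves implicit, but it does not change the argument.
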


These recursion relations can be used to calculate $ngc(G,u_1,u_2)$, assuming that the girth $g$ is known, with a time complexity of $O(|V|+|E|)$ using a breadth-first search algorithm (the pseudocode is given in~\cref{algo:calculateNGC}).

\begin{algorithm}[ht!]
\caption{calculateNumberGirthCycles(Graph $G$, Girth $g$, Edge $u_1u_2$)}
\label{algo:calculateNGC}
  \begin{algorithmic}[1]
            \STATE // This algorithm calculates $ngc(G,u_1u_2)$
            \STATE // Array with $|V|$ entries intialized to $\infty$; the array stores $d(G-u_1u_2,u,u_1)$
            \STATE $dist \gets newArray(|V|, \infty)$
            \STATE // Array with $|V|$ entries intialized to $0$; the array stores $nsp(G-u_1u_2,u,u_1)$
            \STATE $nsp \gets newArray(|V|,0)$
            \STATE $dist[u_1]=0$ // base case
            \STATE $nsp[u_1]=1$ // base case
            \STATE $q \gets emptyQueue()$
            \STATE $q.addLastElement(u_1)$
            \WHILE{$q$ is not empty}
            \STATE $currentVertex \gets q.firstElement()$
            \STATE $q.eraseFirstElement()$
            \STATE // $dist[u_2]$ and $nsp[u_2]$ cannot change anymore
            \IF{$currentVertex=u_2$}
                \IF{$dist[u_2]>g-1$}
                \RETURN 0
                \ELSE
                \RETURN $nsp[u_2]$
                \ENDIF
            \ENDIF
            \FOR{$u' \in N_{G-u_1u_2}(currentVertex)$}
                \IF{$dist[u']=\infty$}
                \STATE $dist[u'] \gets dist[currentVertex]+1$ // recursive case
                \STATE $q.addLastElement(u')$
                \ENDIF
                \IF{$dist[u']=dist[currentVertex]+1$}
                \STATE $nsp[u'] \gets nsp[u']+nsp[currentVertex]$ // recursive case
                \ENDIF
            \ENDFOR
            \ENDWHILE
            \RETURN 0  
  \end{algorithmic}
\end{algorithm}

\section{An algorithm for exhaustively generating all $egr(v,k,g,\lambda)$ graphs}\label{sec:genAlgo}
In this section, we describe an algorithm for efficiently generating all $egr(v,k,g,\lambda)$ graphs for given parameters $v$, $k$, $g$ and $\lambda.$ This algorithm shares similarities with the algorithms described in~\cite{MMN98} and~\cite{EMNN11} for generating cages. However, the current algorithm requires further structural insights related to edge-girth-regular graphs. We first need a lemma that tells us something about subgraphs of an $egr(v,k,g,\lambda)$ graph. The lemma and its proof essentially belong to~\cite[Th.~2.3]{DFJR21}, but we have slightly altered it to better highlight the results that we need.

\begin{lem}\label{lem:subgraphs}
Let $G$ be an $egr(v,k,g,\lambda)$ graph. For odd $g$, let $\mathcal{T}^{u_1}_{k,\frac{g-1}{2}}$ be a tree rooted at vertex $u_1$ in which every internal vertex has degree $k$ and every leaf is at distance $\frac{g-1}{2}$ from $u_1$. Let $v_1, v_2, \ldots, v_k$ be the children of $u_1$ and let $L_1, L_2, \ldots, L_k$ be the set of leaves of $\mathcal{T}^{u_1}_{k,\frac{g-1}{2}}$ at distance $\frac{g-3}{2}$ from $v_1, v_2, \ldots, v_k$, respectively. For even $g$, let $\mathcal{T}^{u_1,u_2}_{k,\frac{g}{2}-1}$ be a tree in which every internal vertex has degree $k$, consisting of the edge $u_1u_2$ and two disjoint trees rooted at respectively $u_1$ and $u_2$ such that the leaves of the two trees are at distance $\frac{g}{2}-1$ from $u_1$ and $u_2$, respectively. Let $L_1$ and $L_2$ be the set of leaves of $\mathcal{T}^{u_1,u_2}_{k,\frac{g}{2}-1}$ at distance $\frac{g}{2}-1$ from $u_1$ and $u_2$, respectively. Now the following hold:

\begin{enumerate}[(i)]
    \item If $g$ is odd (even), $\mathcal{T}^{u_1}_{k,\frac{g-1}{2}}$ ($\mathcal{T}^{u_1,u_2}_{k,\frac{g}{2}-1}$) occurs as a subgraph of $G$.
  \item If $g$ is odd (even), for every $i \in [k]$ ($i \in [2]$) there are exactly $\lambda$ edges in $G$ with one endpoint in $L_i$ and another endpoint in $\bigcup_{j}(L_j) \setminus L_i$.
\end{enumerate}
\end{lem}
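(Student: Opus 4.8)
The plan is to prove both parts by interpreting the combinatorial meaning of the two quantities $g$ (the girth) and $\lambda$ (the edge-girth-regularity), using distances inside $G$ rather than any global structural result. First I would establish part (i). The key observation is that since $G$ has girth $g$, the ball of radius $\lfloor\frac{g-1}{2}\rfloor$ around any vertex (respectively, around any edge) induces no short cycles, so the breadth-first-search tree from a root grows freely as long as we stay within that radius. Concretely, for odd $g$ I would argue by induction on the depth $d \le \frac{g-1}{2}$ that the BFS tree from $u_1$ is a genuine tree with every internal vertex of full degree $k$: if two distinct BFS-tree paths from $u_1$ of length at most $\frac{g-1}{2}$ ever met at a common vertex, we would obtain a closed walk of length at most $g-1$, and after discarding repeated edges this would yield a cycle shorter than $g$, contradicting the girth. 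The same reasoning applied to the two trees rooted at the endpoints of an edge $u_1u_2$ handles even $g$; here the edge $u_1u_2$ together with a shortest connection between the two trees would create too short a cycle if the trees overlapped before depth $\frac{g}{2}-1$. This embeds $\mathcal{T}^{u_1}_{k,\frac{g-1}{2}}$ (respectively $\mathcal{T}^{u_1,u_2}_{k,\frac{g}{2}-1}$) as a subgraph of $G$.

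Next I would prove part (ii), which is where the parameter $\lambda$ enters. The strategy is to set up a bijection between girth cycles through a fixed edge and the ``closing edges'' described in the statement. Consider odd $g$ first. Fix a root $u_1$ and its child $v_i$, so the edge $u_1v_i$ lies in exactly $\lambda$ girth cycles. By the Observation relating girth cycles through $u_1v_i$ to shortest paths in $G-u_1v_i$, each such girth cycle is the edge $u_1v_i$ together with a path of length $g-1$ from $u_1$ back to $v_i$ avoiding that edge. I would argue that any such length-$(g-1)$ path must descend from $u_1$ into a different child subtree $L_j$ ($j \ne i$), reach a leaf in $\bigcup_{j \ne i} L_j$, then cross a single ``closing edge'' into a leaf of $L_i$, and finally climb back up the tree $\mathcal{T}^{v_i}$ to $v_i$. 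The reason the path must have exactly this shape is again forced by the girth: the top half of the path is a shortest path of length $\frac{g-1}{2}$ determined uniquely by its endpoint leaf (two distinct such paths to the same leaf would again create a short cycle), and symmetrically for the bottom half, so the only freedom in the girth cycle is the choice of the single closing edge. This yields a bijection between the $\lambda$ girth cycles through $u_1v_i$ and the closing edges joining $L_i$ to $\bigcup_{j\ne i}(L_j)$, giving exactly $\lambda$ such edges.

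The even case is analogous but cleaner: the distinguished edge is $u_1u_2$ itself, every girth cycle through $u_1u_2$ consists of a unique shortest descent of length $\frac{g}{2}-1$ from $u_1$ to a leaf of $L_1$, a single closing edge to a leaf of $L_2$, and a unique shortest ascent of length $\frac{g}{2}-1$ back to $u_2$. Since there are exactly $\lambda$ girth cycles through $u_1u_2$, there are exactly $\lambda$ closing edges between $L_1$ and $L_2$, which is precisely the $i=1$ (equivalently $i=2$) assertion.

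The main obstacle I anticipate is making the ``unique shortest path to each leaf'' claim fully rigorous, i.e.\ verifying that each girth cycle decomposes uniquely into (descent, closing edge, ascent) with both the descent and ascent being tree paths inside $\mathcal{T}$. This requires ruling out a girth cycle that wanders between subtrees more than once or that uses a non-tree edge within a single half; all such configurations have to be excluded by a careful length-and-girth counting argument showing any deviation forces either a cycle shorter than $g$ or a walk longer than $g$. Once this structural rigidity is pinned down, the bijection and hence the count $\lambda$ follow immediately, and since the argument is symmetric in the choice of child (odd case) or endpoint (even case), it holds for every $i$ as claimed.
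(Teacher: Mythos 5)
Your proposal is correct and follows essentially the same route as the argument the paper relies on (it attributes the lemma and its proof to Theorem~2.3 of Drglin, Filipovski, Jajcay and Raiman): the girth condition forces the Moore tree to embed as a subgraph with all shortest paths inside the ball unique, and the girth cycles through the root edge $u_1v_i$ (resp.\ $u_1u_2$) are in bijection with their antipodal ``closing'' edges joining $L_i$ to $\bigcup_{j\neq i}L_j$, so edge-girth-regularity gives exactly $\lambda$ such edges. The one step you flag as delicate --- that each girth cycle decomposes uniquely as descent, closing edge, ascent --- does go through by the length-and-girth counting you describe, since any vertex at cycle-distance $d\leq\lfloor g/2\rfloor$ from the root must be at true distance $d$, forcing both arcs to be the unique tree paths.
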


We now need the following definition.
\begin{defi}
Given a graph $G=(V,E)$ on $v$ vertices and integers $k, g$ and $\lambda$, for vertices $u_1, u_2 \in V$, $u_1 \neq u_2$, we call the set $\{u_1,u_2\}$ a \textit{valid pair} (relative to $G$, $k$, $g$ and $\lambda$) if $u_1u_2 \notin E$, $deg_G(u_1)<k$, $deg_G(u_2)<k$, the girth of $(V,E \cup \{u_1u_2\})$ is at least $g$ and there is no edge in $(V,E \cup \{u_1,u_2\})$ which is contained in strictly more than $\lambda$ cycles of length $g$. For a vertex $u_1 \in V(G)$, we define the set $validPairs_G(u_1)$ as $\{\{u_1,u_2\}~|~ u_2 \in V(G)\text{ and }\{u_1,u_2\}\text{ is a valid pair}\}$.
\end{defi}

Based on Lemma~\ref{lem:subgraphs}, the algorithm starts from the tree $\mathcal{T}^{u_1}_{k,\frac{g-1}{2}}$ (or the tree $\mathcal{T}^{u_1,u_2}_{k,\frac{g}{2}-1}$ depending on the parity of $g$) and adds isolated vertices until the resulting graph has $v$ vertices. The algorithm then recursively adds edges to this graph to exhaustively generate all $egr(v,k,g,\lambda)$ graphs. More specifically, in each recursion step the algorithm branches by adding one edge in a number of different ways, ensuring that no $egr(v,k,g,\lambda)$ graphs are omitted from the search space. We observe that the algorithm only needs to consider edges corresponding to valid pairs.

\begin{obs}
Let $G=(V,E)$ be a graph on $v$ vertices and $u_1,u_2 \in V$ be vertices such that the graph $G'=(V,E \cup \{u_1u_2\})$ is a subgraph of some $egr(v,k,g,\lambda)$ graph for given integers $k, g$ and $\lambda$. Now $\{u_1,u_2\}$ is a valid pair.
\end{obs}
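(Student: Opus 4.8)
The plan is to verify directly that each of the defining conditions of a valid pair holds for $\{u_1,u_2\}$, using the hypothesis that $G'=(V,E\cup\{u_1u_2\})$ is a subgraph of some $egr(v,k,g,\lambda)$ graph $H$. The key observation is that a subgraph relationship is hereditary for exactly the kinds of constraints appearing in the definition: degree upper bounds, girth lower bounds, and upper bounds on girth-cycle multiplicity are all preserved when passing from a supergraph to a subgraph. So the proof should be a short, condition-by-condition check rather than anything requiring a clever construction.

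First I would record that since $H$ contains $G'$ as a subgraph and $H$ is simple with $u_1u_2\in E(H)$, the edge $u_1u_2$ is not already present in $G$ (otherwise we could not have added it), giving $u_1u_2\notin E$. Next, for the degree conditions, I would note that $G'$ is a subgraph of the $k$-regular graph $H$, so $deg_{G'}(u_1)\le deg_H(u_1)=k$ and likewise for $u_2$; since in $G'$ we have added one edge incident to each of $u_1$ and $u_2$, it follows that $deg_G(u_1)=deg_{G'}(u_1)-1\le k-1<k$ and similarly $deg_G(u_2)<k$. For the girth condition, adding edges can only decrease girth, so $g=\mathrm{girth}(H)\le \mathrm{girth}(G')$, which gives that the girth of $(V,E\cup\{u_1u_2\})=G'$ is at least $g$, as required.

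The last condition is the analogue for the $\lambda$-bound, and it is the step I expect to be the main obstacle, since it is the only place where monotonicity is not completely obvious and must be argued carefully. The claim is that no edge of $G'$ lies in strictly more than $\lambda$ girth cycles (cycles of length $g$). Here I would use that every girth cycle of $G'$ through a given edge is also a cycle of length $g$ in $H$ through that same edge: because $G'\subseteq H$ and both have girth $g$, a cycle of length $g$ in $G'$ is a shortest cycle and remains a cycle of the same length in $H$. Hence for any edge $e\in E(G')$ the number of length-$g$ cycles of $G'$ through $e$ is at most the number of length-$g$ cycles of $H$ through $e$, which equals $\lambda$ by the definition of $egr(v,k,g,\lambda)$. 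Therefore no edge of $G'$ is contained in more than $\lambda$ cycles of length $g$.

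Combining these observations, all defining conditions of a valid pair are satisfied, so $\{u_1,u_2\}$ is a valid pair. The one subtlety worth stating explicitly in the write-up is that the girth-cycle comparison relies on $G'$ and $H$ having the \emph{same} girth $g$; this is what lets us identify girth cycles of $G'$ with length-$g$ cycles of $H$ rather than merely with shortest cycles of differing lengths. Once that point is isolated, the rest is a routine verification and no further machinery is needed.
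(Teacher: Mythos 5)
Your proof is correct and follows essentially the same route as the paper: a condition-by-condition check using the fact that degree bounds, girth lower bounds, and upper bounds on the number of length-$g$ cycles through an edge are all monotone under passing to subgraphs. One small caveat: your closing remark that the argument ``relies on $G'$ and $H$ having the same girth $g$'' is neither needed nor guaranteed (the girth of $G'$ may exceed $g$, in which case the $\lambda$-condition holds vacuously); the load-bearing fact is simply that any cycle of length $g$ in $G'$ is also a cycle of length $g$ in $H$, so the count through each edge is at most $\lambda$.
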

\begin{proof}
Since $G'$ is a subgraph an of $egr(v,k,g,\lambda)$ graph, we have $deg_{G'}(u_i) \leq k$ and thus $deg_G(u_i)<k$ ($i \in [2]$). Moreover, the girth of $G'$ cannot be strictly smaller than $g$, because adding edges to $G'$ cannot increase its girth.\ Similarly, there cannot be an edge in $G'$ which is contained in strictly more than $\lambda$ cycles of length $g$, because this number cannot decrease by adding more edges to $G'$.
\end{proof}

The algorithm adds edges in two phases. In the first phase, the algorithm only considers adding an edge between two vertices that are leaves of the tree $\mathcal{T}^{u_1}_{k,\frac{g-1}{2}}$ (or the tree $\mathcal{T}^{u_1,u_2}_{k,\frac{g}{2}-1}$). The first phase ends when condition (ii) of~\cref{lem:subgraphs} is met. In the second phase, the algorithm also allows all other types of edges to be added. The intuition behind these two phases is that the algorithm tries to maximally exploit known structural results about edge-girth-regular graphs in order to quickly detect which graphs can never occur as a subgraph of an edge-girth-regular graph (this follows the fail-first principle).

There are a number of different ways in which equivalent subgraphs can arise when edges are added. To overcome this inefficiency, the algorithm employs the following principles:
\begin{itemize}
    \item The order in which the edges are added does not matter (e.g.\ first adding edge $e_1$ and then adding edge $e_2$ is equivalent with first adding $e_2$ and then $e_1$). Therefore, the algorithm keeps track of which edges are eligible to be added at each recursive call. We note that the set of eligible edges is always a subset of the valid pairs.

	At each recursive call, the algorithm determines the vertex with degree strictly less than $k$ which has the least number of eligible edges that can be added (where ties are broken in an arbitrary fashion). The algorithm branches by iterating over all possibilities for the next edge that could be added incident with the current vertex. If the algorithm decides to add edge $e$ in the current node of the recursion tree, then it will mark $e$ as ineligible for all subsequent children in the recursion tree to avoid duplicate work.

	\item If $u_1, u_2$ and $u_3$ are pairwise distinct vertices and $u_2$ and $u_3$ are both isolated, then the graph $(V,E\cup\{u_1u_2\})$ is isomorphic to the graph $(V,E\cup\{u_1u_3\})$. The algorithm exploits this by keeping track of which vertices are isolated and only allows the addition of an edge between a non-isolated vertex and one particular isolated vertex.

	\item Isomorphic graphs may also arise in more complicated ways than what was described before. In general, the algorithm avoids this by computing a canonical form of the graph using the \textit{nauty} package~\cite{MP14}. Here, two graphs are isomorphic if and only if they have the same canonical form. The canonical forms are stored in a splay tree~\cite{ST85}, a classical data structure that allows efficient insertion and lookup of its elements. The algorithm uses this data structure to prune graphs for which an isomorphic graph was already previously constructed.
\end{itemize}

Finally, the algorithm also prunes graphs from the recursion tree for which there is some vertex such that there are not enough eligible edges that could be added in order to make the degree of that vertex equal to $k$, since such graphs can clearly never lead to $egr(v,k,g,\lambda)$ graphs. The pseudocodes of the function that recursively adds edges and the function that exhaustively generates all $egr(v,k,g,\lambda)$ graphs are shown in~\cref{algo:recAddEdges} and~\cref{algo:genAlgo}, respectively.

\begin{algorithm}[ht!]
\caption{recursivelyAddEdges(Graph $G=(V,E)$, Initial leaves $L$, Eligible edges $EE$, Integer $v$, Integer $k$, Integer $g$, Integer $\lambda$)}
\label{algo:recAddEdges}
  \begin{algorithmic}[1]
		\STATE // Each recursive call of this function adds one edge to the graph
		\STATE 
		\STATE // Do not do the same work twice
            \IF{function was called before with a graph as parameter that is isomorphic with $G$}
			\RETURN
            \ENDIF
		\STATE // No more edges need to be added
		\IF{$|E|=\frac{vk}{2}$}
			\IF{$G$ is an $egr(v,k,g,\lambda)$ graph}
				\STATE Output $G$
			\ENDIF
			\RETURN
            \ENDIF
		\STATE $verticesToConsider \gets \{u \in V~|~deg_G(u)<k\}$
		\IF{Condition (ii) of~\cref{lem:subgraphs} is not met}
			\STATE $verticesToConsider \gets verticesToConsider \cap L$
            \ENDIF
		\STATE // Choose $u_1$ as the vertex with the least number of adjacent eligible edges
		\STATE $u_1 \gets \arg\min_{u \in verticesToConsider}|EE[u]|$
		\STATE $edgesToConsider \gets EE[u_1]$	
		\IF{Condition (ii) of~\cref{lem:subgraphs} is not met}
			\STATE $edgesToConsider \gets edgesToConsider \cap (L \times L)$
            \ENDIF
		\STATE // Only keep at most one edge between a non-isolated vertex and an isolated vertex
		\STATE $edgesToConsider \gets removeAllButOnePendantEdge(edgesToConsider)$
		\STATE // Branch on the first edge incident with $u_1$ that is added
            \FOR{$u_1u_2 \in edgesToConsider$}
                \STATE $G' \gets (V,E\cup\{u_1u_2\})$
		   \STATE $EE' \gets updateEligibleEdges(G',EE)$
		   \IF{$deg_{G'}(u)+|EE'(u)| \geq k$ for all $u \in V$}
			\STATE $recursivelyAddEdges(G',L,EE',v,k,g,\lambda)$
		   \ENDIF
		   \STATE $EE \gets markAsIneligible(u_1u_2,EE)$
            \ENDFOR
  \end{algorithmic}
\end{algorithm}

\begin{algorithm}[ht!]
\caption{generateAllEdgeGirthRegularGraphs(Integer $v$, Integer $k$, Integer $g$, Integer $\lambda$)}
\label{algo:genAlgo}
  \begin{algorithmic}[1]
		\STATE // This function generates all $egr(v,k,g,\lambda)$ graphs
		\STATE $T \gets emptyTree()$
            \IF{$g$ is odd}
			\STATE $T \gets \mathcal{T}^{u_1}_{k,\frac{g-1}{2}}$
		\ELSE
			\STATE $T \gets \mathcal{T}^{u_1,u_2}_{k,\frac{g}{2}-1}$
            \ENDIF
		\STATE // Trivially, no $egr(v,k,g,\lambda)$ graph exists if $v$ is too small
		\IF{$v < |V(T)|$}
			\RETURN
            \ENDIF
		\STATE $L \gets leavesOf(T)$
		\STATE // Add isolated vertices until $G$ has $v$ vertices
		\STATE $G \gets (V(T) \cup [v-|V(T)|], E(T))$
		\STATE // $EE$ is a data structure indexed by vertices that stores a list of eligible edges for each vertex
		\STATE $EE \gets listOfLists(v)$
            \FOR{$u \in V(G)$}
                \STATE $EE[u] \gets validPairs_G(u)$
            \ENDFOR
		\STATE $recursivelyAddEdges(G,L,EE,v,k,g,\lambda)$
  \end{algorithmic}
\end{algorithm}

\subsection{Variants}\label{subsec:variants}
Additionally, we briefly describe two variants of the aforementioned algorithm. For most tuples $(v,k,g,\lambda)$ this algorithm is the best option, but for some tuples the variants are faster. The first variant is related to the two phases in which the edges are added. In some cases, it is faster to omit the first phase and directly go to the second phase in which all types of edges are allowed to be added. The second variant is related to the cost of computing whether a pair $\{u_1,u_2\}$ is valid for a graph $G=(V,E)$. More specifically, to determine if there is no edge in $G'=(V,E \cup \{u_1u_2\})$ that is contained in strictly more than $\lambda$ cycles of length $g$, the algorithm has to repeatedly calculate $ngc(G',e)$ for all edges $e \in E(G')$, which can be relatively expensive in practice. For some tuples $(v,k,g,\lambda)$ it is faster to assume that $\lambda$ is infinitely large for determining whether $\{u_1,u_2\}$ is a valid pair, because this allows us to avoid having to compute $ngc(G',e)$ (making this assumption maintains the correctness of the algorithm). In other words, the second variant also allows certain invalid edges to be added in the hope that this will quickly lead to graphs that can be pruned.

\section{Improved lower and upper bounds for $n(k,g,\lambda)$}\label{sec:bounds}

We implemented the algorithm from~\cref{sec:genAlgo} (as well as the variants discussed in~\cref{subsec:variants}). We executed these algorithms on a computer cluster for various tuples $(v,k,g,\lambda)$ to find lower and upper bounds for $n(k,g,\lambda)$ (as well as the corresponding graphs that attain the upper bounds). In total, these computations took around 6 CPU-years. If an algorithm was able to find an $egr(v,k,g,\lambda)$ graph, then clearly $n(k,g,\lambda) \leq v$. On the other hand, if an algorithm terminated for all integers $v' < v$ with input parameters $(v',k,g,\lambda)$ without generating any edge-girth-regular graphs, then clearly $n(k,g,\lambda) \geq v$ since the algorithm is exhaustive. For a given triple $(k,g,\lambda)$, the algorithm always started by generating all $egr(v,k,g,\lambda)$ graphs, where $v$ is the best available lower bound from the literature for $n(k,g,\lambda)$. The parameter $v$ was gradually increased if no edge-girth-regular graphs could be found, but skipping orders for which $vk$ is not even or $2g$ is not a divisor of $vk\lambda$ (see~\cref{prop:elementalProperties}). 

We also searched for edge-girth-regular graphs among available exhaustive lists of highly symmetrical graphs, which resulted in improved upper bounds for some cases that were too large for~\cref{algo:genAlgo} to find. More specifically, we tested all vertex-transitive graphs with order at most 47~\cite{HR20}, all graphs with order at most 26 having two vertex orbits~\cite{R19}, all 3-, 4-, 5- and 6-regular graphs available at House of Graphs~\cite{HOG}, all 3-regular vertex-transitive graphs with order at most 1280, all 4-regular arc-transitive graphs with order at most 640~\cite{PSV13} and all 5-regular arc-transitive graphs with order at most 500~\cite{P24}. We refer the interested reader to~\cref{sec:app1} for additional details about sanity checks and independent verifications related to the correctness of our implementations of the algorithms. All code and data related to this paper is made publicly available at \url{https://github.com/JorikJooken/edgeGirthRegularGraphs}. All graphs on at most 250 vertices are also made available at House of Graphs~\cite{HOG} by searching for the term “edge-girth-regular”.

So far, most attention in the literature has been given to the 3- and 4-regular case. In the current paper, we additionally focused on the 5- and 6-regular case. More precisely, we focused on the following pairs of $(k,g)$ that were computationally feasible for the exhaustive generation algorithm: $(3,3)$, $(3,4)$, $(3,5)$, $(3,6)$, $(3,7)$, $(3,8)$, $(4,3)$, $(4,4)$, $(4,5)$, $(4,6)$, $(5,3)$, $(5,4)$, $(5,5)$, $(6,3)$, $(6,4)$ and $(6,5)$. For the parameter $\lambda$, we considered all values for which the existence of an $egr(v,k,g,\lambda)$ graph was not ruled out by~\cref{prop:elementalProperties}. In Tables~\ref{tab:3RegBounds}-\ref{tab:6RegBounds} in~\cref{app:tables}, we summarized the bounds that we were able to obtain as well as the previous best available bounds from the literature (to the best of our knowledge) for the 3-, 4-, 5- and 6-regular case, respectively. Results where we were able to improve the best available lower or upper bound from the literature are shown in italics. We also marked results in bold where we were able to prove extremality (i.e.\ the lower bound equals the upper bound) and this was not already known from the literature. For the upper bounds, we indicated between brackets how many pairwise non-isomorphic graphs there are that attain this upper bound. For the lower bounds we additionally use the fact $vk$ is even and that $2g$ is a divisor of $vk\lambda$ (see~\cref{prop:elementalProperties}) without repeating this reference everywhere.

Moreover, we summarized the orders of the smallest $egr(v,3,g,\lambda)$ graphs among the vertex-transitive graphs (denoted as $n_{vt}(3,g,\lambda)$) for $9 \leq g \leq 16$ and the orders of the smallest $egr(v,4,g,\lambda)$ graphs among the arc-transitive graphs (denoted as $n_{at}(4,g,\lambda)$) for $7 \leq g \leq 10$ in Table~\ref{tab:highGirth} in~\cref{app:tables}. Note that these orders also yield upper bounds for $n(k,g,\lambda)$. Since the smallest 3-regular graphs with girth 9 have order 58~\cite{BMS95} and we determined that none of these graphs are edge-girth-regular, we also obtain $n(3,9,6)=60$ and $n(3,9,8)=60.$ Two graphs attaining this bound are shown in~\cref{fig:60Vertices}.

\begin{figure}[h!]
\begin{center}
\begin{tikzpicture}[scale=0.95]
  \def\sides{60}
  \def\radius{3}

  \foreach \i in {1,...,\sides} {
    \fill ({360/\sides * \i}:\radius) circle (2pt);
  }
  
  \foreach \i in {1,13,25,37,49} {
    \draw ({360/\sides * (\i + 1)}:\radius) -- ({360/\sides * \i}:\radius);
    \draw ({360/\sides * (\i + 51)}:\radius) -- ({360/\sides * \i}:\radius);
  }
  \foreach \i in {2,14,26,38,50} {
    \draw ({360/\sides * (\i + 1)}:\radius) -- ({360/\sides * \i}:\radius);
    \draw ({360/\sides * (\i + 18)}:\radius) -- ({360/\sides * \i}:\radius);
  }
  \foreach \i in {3,15,27,39,51} {
    \draw ({360/\sides * (\i + 1)}:\radius) -- ({360/\sides * \i}:\radius);
    \draw ({360/\sides * (\i + 42)}:\radius) -- ({360/\sides * \i}:\radius);
  }
  \foreach \i in {4,16,28,40,52} {
    \draw ({360/\sides * (\i + 1)}:\radius) -- ({360/\sides * \i}:\radius);
    \draw ({360/\sides * (\i + 9)}:\radius) -- ({360/\sides * \i}:\radius);
  }
  \foreach \i in {5,17,29,41,53} {
    \draw ({360/\sides * (\i + 1)}:\radius) -- ({360/\sides * \i}:\radius);
    \draw ({360/\sides * (\i + 31)}:\radius) -- ({360/\sides * \i}:\radius);
  }
  \foreach \i in {6,18,30,42,54} {
    \draw ({360/\sides * (\i + 1)}:\radius) -- ({360/\sides * \i}:\radius);
    \draw ({360/\sides * (\i + 17)}:\radius) -- ({360/\sides * \i}:\radius);
  }
  \foreach \i in {7,19,31,43,55} {
    \draw ({360/\sides * (\i + 1)}:\radius) -- ({360/\sides * \i}:\radius);
    \draw ({360/\sides * (\i + 51)}:\radius) -- ({360/\sides * \i}:\radius);
  }
  \foreach \i in {8,20,32,44,56} {
    \draw ({360/\sides * (\i + 1)}:\radius) -- ({360/\sides * \i}:\radius);
    \draw ({360/\sides * (\i + 42)}:\radius) -- ({360/\sides * \i}:\radius);
  }
  \foreach \i in {9,21,33,45,57} {
    \draw ({360/\sides * (\i + 1)}:\radius) -- ({360/\sides * \i}:\radius);
    \draw ({360/\sides * (\i + 18)}:\radius) -- ({360/\sides * \i}:\radius);
  }
  \foreach \i in {10,22,34,46,58} {
    \draw ({360/\sides * (\i + 1)}:\radius) -- ({360/\sides * \i}:\radius);
    \draw ({360/\sides * (\i + 9)}:\radius) -- ({360/\sides * \i}:\radius);
  }
  \foreach \i in {11,23,35,47,59} {
    \draw ({360/\sides * (\i + 1)}:\radius) -- ({360/\sides * \i}:\radius);
    \draw ({360/\sides * (\i + 43)}:\radius) -- ({360/\sides * \i}:\radius);
  }
  \foreach \i in {12,24,36,48,60} {
    \draw ({360/\sides * (\i + 1)}:\radius) -- ({360/\sides * \i}:\radius);
    \draw ({360/\sides * (\i + 29)}:\radius) -- ({360/\sides * \i}:\radius);
  }
\end{tikzpicture} \quad
\begin{tikzpicture}[scale=0.95]
  \def\sides{60}
  \def\radius{3}

  \foreach \i in {1,...,\sides} {
    \fill ({360/\sides * \i}:\radius) circle (2pt);
  }
  
  \foreach \i in {1,13,25,37,49} {
    \draw ({360/\sides * (\i + 1)}:\radius) -- ({360/\sides * \i}:\radius);
    \draw ({360/\sides * (\i + 45)}:\radius) -- ({360/\sides * \i}:\radius);
  }
  \foreach \i in {2,14,26,38,50} {
    \draw ({360/\sides * (\i + 1)}:\radius) -- ({360/\sides * \i}:\radius);
    \draw ({360/\sides * (\i + 15)}:\radius) -- ({360/\sides * \i}:\radius);
  }
  \foreach \i in {3,15,27,39,51} {
    \draw ({360/\sides * (\i + 1)}:\radius) -- ({360/\sides * \i}:\radius);
    \draw ({360/\sides * (\i + 21)}:\radius) -- ({360/\sides * \i}:\radius);
  }
  \foreach \i in {4,16,28,40,52} {
    \draw ({360/\sides * (\i + 1)}:\radius) -- ({360/\sides * \i}:\radius);
    \draw ({360/\sides * (\i + 26)}:\radius) -- ({360/\sides * \i}:\radius);
  }
  \foreach \i in {5,17,29,41,53} {
    \draw ({360/\sides * (\i + 1)}:\radius) -- ({360/\sides * \i}:\radius);
    \draw ({360/\sides * (\i + 45)}:\radius) -- ({360/\sides * \i}:\radius);
  }
  \foreach \i in {6,18,30,42,54} {
    \draw ({360/\sides * (\i + 1)}:\radius) -- ({360/\sides * \i}:\radius);
    \draw ({360/\sides * (\i + 34)}:\radius) -- ({360/\sides * \i}:\radius);
  }
  \foreach \i in {7,19,31,43,55} {
    \draw ({360/\sides * (\i + 1)}:\radius) -- ({360/\sides * \i}:\radius);
    \draw ({360/\sides * (\i + 49)}:\radius) -- ({360/\sides * \i}:\radius);
  }
  \foreach \i in {8,20,32,44,56} {
    \draw ({360/\sides * (\i + 1)}:\radius) -- ({360/\sides * \i}:\radius);
    \draw ({360/\sides * (\i + 11)}:\radius) -- ({360/\sides * \i}:\radius);
  }
  \foreach \i in {9,21,33,45,57} {
    \draw ({360/\sides * (\i + 1)}:\radius) -- ({360/\sides * \i}:\radius);
    \draw ({360/\sides * (\i + 26)}:\radius) -- ({360/\sides * \i}:\radius);
  }
  \foreach \i in {10,22,34,46,58} {
    \draw ({360/\sides * (\i + 1)}:\radius) -- ({360/\sides * \i}:\radius);
    \draw ({360/\sides * (\i + 15)}:\radius) -- ({360/\sides * \i}:\radius);
  }
  \foreach \i in {11,23,35,47,59} {
    \draw ({360/\sides * (\i + 1)}:\radius) -- ({360/\sides * \i}:\radius);
    \draw ({360/\sides * (\i + 34)}:\radius) -- ({360/\sides * \i}:\radius);
  }
  \foreach \i in {12,24,36,48,60} {
    \draw ({360/\sides * (\i + 1)}:\radius) -- ({360/\sides * \i}:\radius);
    \draw ({360/\sides * (\i + 39)}:\radius) -- ({360/\sides * \i}:\radius);
  }
\end{tikzpicture}
\end{center}
\caption{An extremal $egr(60,3,9,6)$ graph (left) and an extremal $egr(60,3,9,8)$ graph (right).}\label{fig:60Vertices} 
\end{figure}
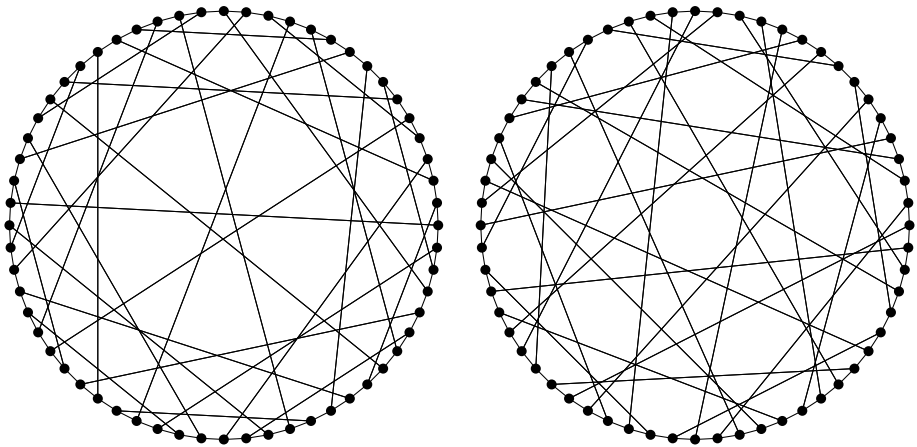

Apart from these two graphs, several new extremal graphs were determined and we improved various lower and upper bounds from the literature. We now discuss a few remarkable observations that follow from these calculations.

We have $n(3,6,6)=16$ and it is attained by the M\"{o}bius-Kantor graph (a vertex-transitive graph on 16 vertices). This graph can also be used to obtain other extremal edge-girth-regular graphs. More specifically, we have $n(4,5,6)=20$ and it is attained by a graph that can be obtained by adding four vertices to the M\"{o}bius-Kantor graph and adding the appropriate edges (see~\cref{fig:MoebiusKantorAndOffspring}). This makes the graph on 20 vertices one of the few known extremal edge-girth-regular graphs which are not vertex-transitive nor edge-transitive. It is less well-known than the M\"{o}bius-Kantor graph, but appears for example in~\cite{CGJ23} as a 4-regular graph containing many connected induced subgraphs.

\begin{figure}[h!]
\begin{center}
\begin{tikzpicture}[scale=0.95]
  \def\sides{16}
  \def\radius{3}

  \foreach \i in {1,...,\sides} {
    \fill ({360/\sides * \i}:\radius) circle (2pt);
  }
  
  \foreach \i in {1,3,5,7,9,11,13,15} {
    \draw ({360/\sides * (\i + 1)}:\radius) -- ({360/\sides * \i}:\radius);
    \draw ({360/\sides * (\i + 11)}:\radius) -- ({360/\sides * \i}:\radius);
  }
  \foreach \i in {2,4,6,8,10,12,14,16} {
    \draw ({360/\sides * (\i + 1)}:\radius) -- ({360/\sides * \i}:\radius);
    \draw ({360/\sides * (\i + 5)}:\radius) -- ({360/\sides * \i}:\radius);
  } 
\end{tikzpicture} \quad
\begin{tikzpicture}[scale=0.95]
  \def\sides{16}
  \def\radius{3}
  \def\rat{2.2}

  \foreach \i in {1,3,5,7,9,11,13,15} {
    \draw ({360/\sides * (\i + 1)}:\radius) -- ({360/\sides * \i}:\radius);
    \draw ({360/\sides * (\i + 11)}:\radius) -- ({360/\sides * \i}:\radius);
  }
  \foreach \i in {2,4,6,8,10,12,14,16} {
    \draw ({360/\sides * (\i + 1)}:\radius) -- ({360/\sides * \i}:\radius);
    \draw ({360/\sides * (\i + 5)}:\radius) -- ({360/\sides * \i}:\radius);
  }

  \draw ({360/4 * 1 + 180/\sides}:\radius/\rat)  -- ({360/\sides * (3)}:\radius);
  \draw ({360/4 * 1 + 180/\sides}:\radius/\rat)  -- ({360/\sides * (8)}:\radius);
  \draw ({360/4 * 1 + 180/\sides}:\radius/\rat)  -- ({360/\sides * (11)}:\radius);
  \draw ({360/4 * 1 + 180/\sides}:\radius/\rat)  -- ({360/\sides * (16)}:\radius);

  \draw ({360/4 * 2 + 180/\sides}:\radius/\rat)  -- ({360/\sides * (1)}:\radius);
  \draw ({360/4 * 2 + 180/\sides}:\radius/\rat)  -- ({360/\sides * (6)}:\radius);
  \draw ({360/4 * 2 + 180/\sides}:\radius/\rat)  -- ({360/\sides * (9)}:\radius);
  \draw ({360/4 * 2 + 180/\sides}:\radius/\rat)  -- ({360/\sides * (14)}:\radius);

  \draw ({360/4 * 3 + 180/\sides}:\radius/\rat)  -- ({360/\sides * (2)}:\radius);
  \draw ({360/4 * 3 + 180/\sides}:\radius/\rat)  -- ({360/\sides * (5)}:\radius);
  \draw ({360/4 * 3 + 180/\sides}:\radius/\rat)  -- ({360/\sides * (10)}:\radius);
  \draw ({360/4 * 3 + 180/\sides}:\radius/\rat)  -- ({360/\sides * (13)}:\radius);

  \draw ({360/4 * 4 + 180/\sides}:\radius/\rat)  -- ({360/\sides * (4)}:\radius);
  \draw ({360/4 * 4 + 180/\sides}:\radius/\rat)  -- ({360/\sides * (7)}:\radius);
  \draw ({360/4 * 4 + 180/\sides}:\radius/\rat)  -- ({360/\sides * (12)}:\radius);
  \draw ({360/4 * 4 + 180/\sides}:\radius/\rat)  -- ({360/\sides * (15)}:\radius);

    \foreach \i in {3,8,11,16} {
    \draw[fill,black] ({360/\sides * \i}:\radius) circle (2pt);
  }
  \foreach \i in {1,6,9,14} {
    \draw[fill,green] ({360/\sides * \i}:\radius) circle (2pt);
  }
  \foreach \i in {2,5,10,13} {
    \draw[fill,blue] ({360/\sides * \i}:\radius) circle (2pt);
  }
  \foreach \i in {4,7,12,15} {
    \draw[fill,red] ({360/\sides * \i}:\radius) circle (2pt);
  }
  
   \draw[fill,black] ({360/4 * 1 + 180/\sides}:\radius/\rat) circle (2pt);
   \draw[fill,green] ({360/4 * 2 + 180/\sides}:\radius/\rat) circle (2pt);
   \draw[fill,blue] ({360/4 * 3 + 180/\sides}:\radius/\rat) circle (2pt);
   \draw[fill,red] ({360/4 * 4 + 180/\sides}:\radius/\rat) circle (2pt);
   
\end{tikzpicture}
\end{center}
\caption{An $egr(16,3,6,6)$ graph (the M\"{o}bius-Kantor graph) and an $egr(20,4,5,6)$ graph. Both graphs are the unique extremal graphs for these parameters.}\label{fig:MoebiusKantorAndOffspring}
\end{figure}
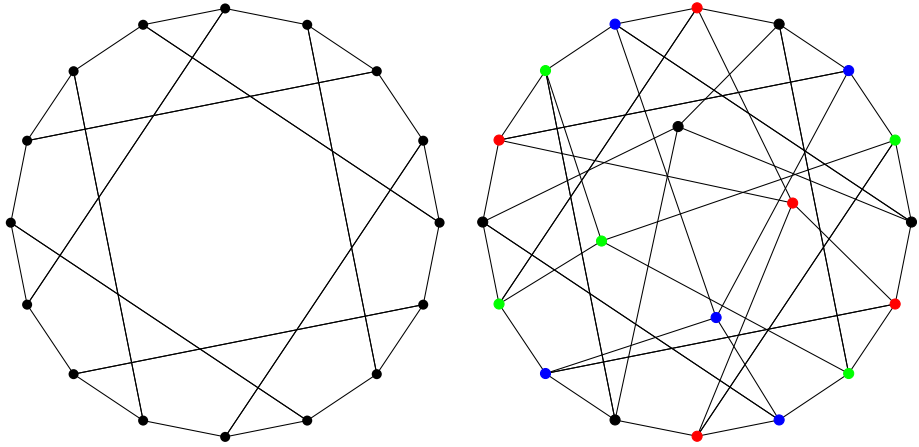

For the cage problem, an important question that has been open for more than 40 years asks whether every minimum order $k$-regular graph with even girth $g$ (i.e.\@ cage) is bipartite~\cite{EJ08,W82}. We remark that the analogous question for extremal edge-girth-regular graphs has a negative answer. For example,~\cref{fig:6416On15Vertices} shows the unique extremal $egr(15,6,4,16)$ that we determined in the current paper. This graph has girth 4 and is extremal, but is not bipartite.

\begin{figure}[h!]
\begin{center}
\begin{tikzpicture}
  \def\sides{15}
  \def\radius{3}

  \foreach \i in {1,...,\sides} {
    \fill ({360/\sides * \i}:\radius) circle (2pt);
  }
  
  \foreach \i in {1,...,\sides} {
    \draw ({360/\sides * (\i + 1)}:\radius) -- ({360/\sides * \i}:\radius);
    \draw ({360/\sides * (\i + 4)}:\radius) -- ({360/\sides * \i}:\radius);
    \draw ({360/\sides * (\i + 6)}:\radius) -- ({360/\sides * \i}:\radius);
    \draw ({360/\sides * (\i + 9)}:\radius) -- ({360/\sides * \i}:\radius);
    \draw ({360/\sides * (\i + 11)}:\radius) -- ({360/\sides * \i}:\radius);
  }
\end{tikzpicture}
\end{center}
\caption{The unique extremal $egr(15,6,4,16)$ graph is not bipartite and has even girth.}\label{fig:6416On15Vertices} 
\end{figure}
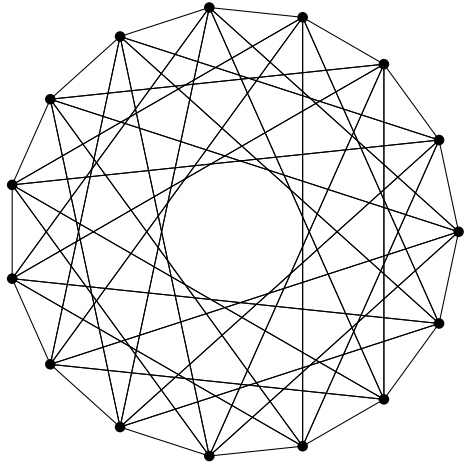

Porups{\'a}nszki describes the existence of an $egr(54,3,8,8)$ graph~\cite[Cor.\ 2.6]{P23}. Similarly, Yang, Sun and Zhang describe a Wenger graph, which is an $egr(54,3,8,8)$ graph~\cite[Th.\ 1]{YSZ23}. Araujo-Pardo and Leemans went one step further and made the following conjecture:
\begin{conj}[Conj. 4.4 in~\cite{AL22}]\label{conj:AL}
For $q \geq 3$ a prime power and $g \in \{8,12\}$ there exists a family of~$egr(2q^{\frac{g-2}{2}},q,g,(q-1)^{\frac{g-2}{2}}(q-2))$ graphs. These graphs are extremal edge-girth-regular graphs.
\end{conj}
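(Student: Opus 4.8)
The plan is to \emph{refute} \cref{conj:AL} for the cubic case rather than to establish it, because the statement bundles an existence claim together with an extremality claim, and it is the latter that is vulnerable. Specializing to $q=3$, the conjectured parameters are $egr(54,3,8,8)$ when $g=8$ (here $v=2\cdot 3^{3}=54$ and $\lambda=2^{3}\cdot 1=8$) and $egr(486,3,12,32)$ when $g=12$ (here $v=2\cdot 3^{5}=486$ and $\lambda=2^{5}\cdot 1=32$). An $egr(54,3,8,8)$ graph is already known to exist, so the weak point is the assertion that these orders are \emph{minimum}. To disprove extremality it therefore suffices to exhibit a single edge-girth-regular graph sharing the triple $(k,g,\lambda)$ but on strictly fewer vertices.

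First I would gauge how much slack the conjecture leaves against the lower bound \eqref{lowerBoundGEven}. For $(k,g,\lambda)=(3,8,8)$ that bound gives $M(3,8)+\lceil 2(2^{4}-8)/3\rceil=30+6=36$, and for $(3,12,32)$ it gives $M(3,12)+\lceil 2(2^{6}-32)/3\rceil=126+22=148$. In both cases the conjectured order ($54$, respectively $486$) sits far above the theoretical minimum, which strongly suggests that smaller examples exist and pinpoints where the search should begin.

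For $g=8$ the gap $[36,54)$ is narrow enough that an exhaustive certificate is realistic. I would run the generation procedure of \cref{sec:genAlgo} (\cref{algo:genAlgo}) for increasing orders $v$, skipping those ruled out by \cref{prop:elementalProperties} (orders with $vk$ odd or $2g\nmid vk\lambda$) and invoking the linear-time routine of \cref{algo:calculateNGC} to test edge-girth-regularity. As soon as the algorithm outputs an $egr(v,3,8,8)$ graph with $v<54$, the extremality claim fails; since the true value turns out to be $n(3,8,8)=40$, the search yields a concrete counterexample well before reaching $54$.

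The main obstacle is the girth-$12$ case. Here exhaustive generation up to $486$ vertices is prohibitively expensive, so I cannot certify a refutation by brute force. Instead I would comb the available catalogues of highly symmetric cubic graphs -- the vertex-transitive and arc-transitive censuses -- for a graph of order below $486$, verifying with \cref{algo:calculateNGC} that every edge lies in exactly $32$ cycles of length $12$. A single qualifying graph already refutes the extremality half of \cref{conj:AL}. The difficulty is precisely that, lacking exhaustiveness, success hinges on a suitable small example actually occurring in one of these structured lists rather than on any algorithmic guarantee.
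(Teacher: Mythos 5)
Your proposal matches the paper's treatment: the conjecture is refuted for $q=3$ by exhibiting smaller graphs with the same parameters, namely via exhaustive generation showing $n(3,8,8)=40<54$ for $g=8$, and via the cubic vertex-transitive census yielding an $egr(162,3,12,32)$ graph with $162=2\cdot 3^4<486$ for $g=12$. Your parameter computations and the division of labour between the exhaustive algorithm and the symmetric-graph catalogues are exactly what the paper does.
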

However, a careful literature analysis reveals that Poto{\v{c}}nik and Vidali used the exhaustive list of all 3-regular vertex-transitive graphs until order 1280~\cite{PSV13} to filter out the edge-girth-regular graphs and they describe an $egr(v,3,8,8)$ graph in Table 2 of~\cite{PV22} without specifying its order $v$. In the current paper we prove that $n(3,8,8)=40$ and that this is attained by precisely one graph, which is vertex-transitive (shown on the left of~\cref{fig:451On30Vertices}). This disproves~\cref{conj:AL} for $q=3$ and $g=8$. We also showed that $n(3,12,32) \leq 162=2\cdot3^4$ (see Table~\ref{tab:highGirth} in~\cref{app:tables}), thereby also disproving~\cref{conj:AL} for $q=3$ and $g=12.$ Related to this, note that there does not exist any cubic connected vertex-transitive graph on $486=2\cdot3^5$ vertices (see~\cite{PSV13}).

We remark that there are 118 369 811 959 connected 4-regular graphs of order 28 with girth equal to 5 (see \url{https://oeis.org/A184945}), whereas this number is not known yet for order 30.
In the current paper, we show that $n(4,5,1)=30$ and it is attained by a unique graph (shown on the right of~\cref{fig:451On30Vertices}). This emphasizes that many of the results discussed in the current paper would have been computationally infeasible to obtain using existing algorithms.

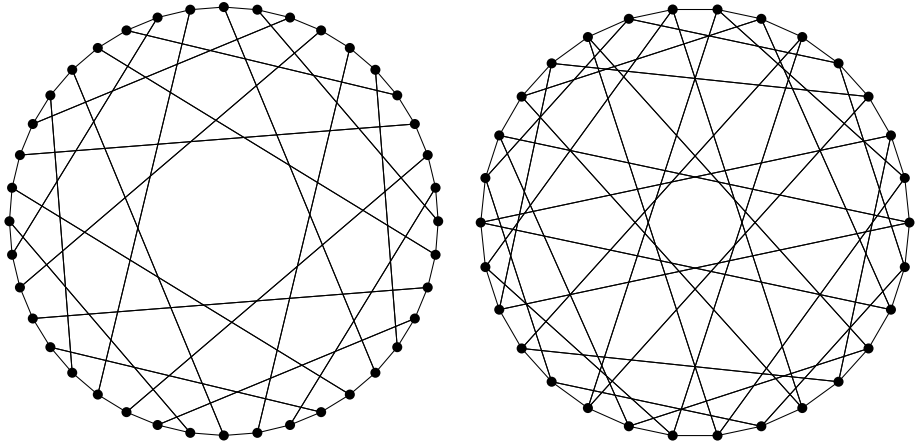
\begin{figure}[h!]
\begin{center}
\begin{tikzpicture}[scale=0.95]
  \def\sides{40}
  \def\radius{3}

  \foreach \i in {1,...,\sides} {
    \fill ({360/\sides * \i}:\radius) circle (2pt);
  }
  
  \foreach \i in {1,5,9,13,17,21,25,29,33,37} {
    \draw ({360/\sides * (\i + 1)}:\radius) -- ({360/\sides * \i}:\radius);
    \draw ({360/\sides * (\i + 31)}:\radius) -- ({360/\sides * \i}:\radius);
  }
   \foreach \i in {2,6,10,14,18,22,26,30,34,38} {
    \draw ({360/\sides * (\i + 1)}:\radius) -- ({360/\sides * \i}:\radius);
    \draw ({360/\sides * (\i + 25)}:\radius) -- ({360/\sides * \i}:\radius);
  }
   \foreach \i in {3,7,11,15,19,23,27,31,35,39} {
    \draw ({360/\sides * (\i + 1)}:\radius) -- ({360/\sides * \i}:\radius);
    \draw ({360/\sides * (\i + 15)}:\radius) -- ({360/\sides * \i}:\radius);
  }
   \foreach \i in {4,8,12,16,20,24,28,32,36,40} {
    \draw ({360/\sides * (\i + 1)}:\radius) -- ({360/\sides * \i}:\radius);
    \draw ({360/\sides * (\i + 9)}:\radius) -- ({360/\sides * \i}:\radius);
  }
  
\end{tikzpicture} \quad
\begin{tikzpicture}[scale=0.95]
  \def\sides{30}
  \def\radius{3}

  \foreach \i in {1,...,\sides} {
    \fill ({360/\sides * \i}:\radius) circle (2pt);
  }
  
  \foreach \i in {1,6,11,16,21,26} {
    \draw ({360/\sides * (\i + 1)}:\radius) -- ({360/\sides * \i}:\radius);
    \draw ({360/\sides * (\i + 6)}:\radius) -- ({360/\sides * \i}:\radius);
    \draw ({360/\sides * (\i + 22)}:\radius) -- ({360/\sides * \i}:\radius);
  }
  \foreach \i in {2,7,12,17,22,27} {
    \draw ({360/\sides * (\i + 1)}:\radius) -- ({360/\sides * \i}:\radius);
    \draw ({360/\sides * (\i + 13)}:\radius) -- ({360/\sides * \i}:\radius);
    \draw ({360/\sides * (\i + 24)}:\radius) -- ({360/\sides * \i}:\radius);
  }
  \foreach \i in {3,8,13,18,23,28} {
    \draw ({360/\sides * (\i + 1)}:\radius) -- ({360/\sides * \i}:\radius);
    \draw ({360/\sides * (\i + 8)}:\radius) -- ({360/\sides * \i}:\radius);
    \draw ({360/\sides * (\i + 17)}:\radius) -- ({360/\sides * \i}:\radius);
  }
  \foreach \i in {4,9,14,19,24,29} {
    \draw ({360/\sides * (\i + 1)}:\radius) -- ({360/\sides * \i}:\radius);
    \draw ({360/\sides * (\i + 5)}:\radius) -- ({360/\sides * \i}:\radius);
    \draw ({360/\sides * (\i + 25)}:\radius) -- ({360/\sides * \i}:\radius);
  }
  \foreach \i in {5,10,15,20,25,30} {
    \draw ({360/\sides * (\i + 1)}:\radius) -- ({360/\sides * \i}:\radius);
    \draw ({360/\sides * (\i + 13)}:\radius) -- ({360/\sides * \i}:\radius);
    \draw ({360/\sides * (\i + 17)}:\radius) -- ({360/\sides * \i}:\radius);
  }
  
\end{tikzpicture}
\end{center}
\caption{The unique extremal $egr(40,3,8,8)$ graph (left) and the unique extremal $egr(30,4,5,1)$ graph (right).}\label{fig:451On30Vertices} 
\end{figure}

\subsection{Independent verifications and sanity checks}\label{sec:app1}
Since the results of this paper rely on the outcome of algorithms, it is very important to take extra measures to ensure that the algorithms were implemented correctly. Therefore, we verified several claims independently using different algorithms and we also did several sanity checks that give us a lot of confidence that the algorithms were implemented correctly. The precise orders for which we were able to do this are different depending on the tuple $(k,g,\lambda)$, so in the interest of space we do not describe the precise orders. All results were in agreement for the following:
\begin{itemize}
    \item We implemented three different algorithms for the exhaustive generation of $egr(v,k,g,\lambda)$ graphs as discussed in~\cref{sec:genAlgo}. These three algorithms were always in agreement with each other, also for orders that are larger than $n(k,g,\lambda)$. For very large orders we were only able to use the most efficient variants due to the computational burden.
    \item We compared the outcome of these three algorithms with several known values of $n(k,g,\lambda)$ from the literature.
    \item We compared the outcome of these three algorithms with the outcome produced by generating all $k$-regular graphs of girth at least $g$ using the generator \textit{GENREG}~\cite{M99} followed by a filter that filters out the edge-girth-regular graphs.
    \item We changed the implementation of~\cref{algo:genAlgo} to generate all connected $k$-regular graphs of girth at least $g$. We compared the output of this algorithm with existing algorithms such as \textit{snarkhunter}~\cite{BGM11} and \textit{GENREG}~\cite{M99} for generating 3-, 4-, 5- and 6-regular graphs.
\end{itemize}

\section{Conclusion}\label{sec:conc}
In this paper, we developed an algorithm for exhaustively generating all $egr(v,k,g,\lambda)$ graphs and used this algorithm to improve existing lower and upper bounds on $n(k,g,\lambda)$. We believe that this algorithm could also be an important tool for future research to address several questions related to edge-girth-regular graphs. For example, there are several parameters $k$, $g$ and $\lambda$ for which it is unknown whether an $egr(v,k,g,\lambda)$ graph exists. For the cubic case, whenever $g \leq 8$ and some $egr(v,3,g,\lambda)$ graph is known to exist, there also exists a vertex-transitive $egr(v,3,g,\lambda)$ graph. Based on this observation and the computations that we did, we make the following conjecture:
\begin{conj}
The following holds: $n(3,7,6)=n(3,8,10)=n(3,8,12)=n(3,8,14)=\infty.$
\end{conj}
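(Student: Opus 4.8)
The statement is a non-existence assertion over all orders $v$, so any proof must rest on constraints that are independent of $v$. The plan is to assume, for contradiction, that an $egr(v,3,g,\lambda)$ graph $G$ exists for one of the four parameter triples, and to derive a contradiction purely from the forced local structure supplied by \cref{lem:subgraphs}. As a preliminary filter, \cref{prop:elementalProperties} already restricts the admissible orders (for instance, counting girth cycles forces $7\mid v$ when $(g,\lambda)=(7,6)$, and $8\mid v$ when $(g,\lambda)\in\{(8,10),(8,14)\}$), but these divisibility conditions alone admit infinitely many $v$ and cannot settle the question; the real work is structural.

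First I would fix the forced tree $T$ from \cref{lem:subgraphs} -- namely $\mathcal{T}^{u_1}_{3,\frac{g-1}{2}}$ for $g=7$ and $\mathcal{T}^{u_1,u_2}_{3,\frac{g}{2}-1}$ for $g=8$ -- and record its leaf sets $L_i$. The girth constraint immediately rules out every edge internal to a single $L_i$, since any two leaves of $L_i$ lie at distance strictly less than $g$ inside $T$, so that all of the missing degree at the leaves is absorbed either by edges between distinct leaf sets or by edges leaving $T$. Condition (ii) of \cref{lem:subgraphs} pins the number of inter-leaf-set edges to $\lambda$ per set, and a degree count then fixes the number of edges leaving $T$. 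The next step is to impose the edge-girth-regularity condition on the \emph{interior} edges of $T$ (the root edges and the edges one level down), each of which must also lie in exactly $\lambda$ girth cycles; this couples the pattern of leaf connections to the deeper shape of $T$ and, together with the vertices reached by the edges leaving $T$ -- which themselves carry forced trees -- generates an expanding system of local constraints. The goal is to show that this system is over-determined: some edge is forced into a number of girth cycles different from $\lambda$, or some vertex cannot be completed to degree $3$ without creating a cycle shorter than $g$.

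The hard part is precisely this last step. A direct calculation shows that the \emph{first} layer of constraints is already automatically consistent: for $(g,\lambda)=(7,6)$ one finds exactly $9$ edges among the leaf sets, matching the $\frac{k\lambda}{2}=9$ girth cycles through the root guaranteed by \cref{prop:elementalProperties}, and the number of inter-leaf-set edges incident to each $L_i$ is exactly $\lambda=6$, matching the girth cycles through each root edge. Thus no contradiction arises at the first neighborhood, and the argument must be pushed outward, tracking how the edges leaving $T$ organize the vertices of the next shell into overlapping forced trees. Controlling this propagation without the configuration escaping to unboundedly large structures is the central obstacle, and it is the reason the statement is presented here as a conjecture rather than a theorem: the exhaustive computations certify non-existence up to the feasible orders, but a finite, order-independent contradiction has not yet been isolated. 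A realistic route to a rigorous proof is therefore likely to be computer-assisted -- encoding the bounded forced neighborhood as a constraint-satisfaction or integer-programming instance and certifying its infeasibility -- with the main mathematical challenge being to prove that a neighborhood of bounded radius already suffices to force the contradiction.
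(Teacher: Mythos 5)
The statement you were asked to prove is presented in the paper as a conjecture, and the paper offers no proof of it: its only support is (a) the exhaustive generation showing that no $egr(v,3,g,\lambda)$ graph with these parameters exists up to the computed lower bounds in \cref{tab:3RegBounds} (orders $42$, $48$, $48$ and $64$ respectively), and (b) the empirical observation that for cubic graphs with $g\le 8$, whenever an $egr$ graph is known to exist there is also a vertex-transitive one, combined with the absence of such graphs in the census of cubic vertex-transitive graphs up to order $1280$. Your proposal correctly recognizes this and does not claim to close the gap, so in that sense your conclusion matches the paper's: the assertion currently rests on computation, not on an order-independent argument. Your preliminary observations check out where they can be verified --- the divisibility constraints from \cref{prop:elementalProperties}(ii) ($7\mid v$ for $(g,\lambda)=(7,6)$, $8\mid v$ for $(8,10)$ and $(8,14)$), and the first-layer consistency count for $(7,6)$, where the $9=\frac{k\lambda}{2}$ girth cycles through the root are in bijection with the $\frac{3\lambda}{2}=9$ inter-leaf-set edges forced by \cref{lem:subgraphs}(ii), so no contradiction can arise at radius one.

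The one point to flag is that what you have written is an analysis of why a proof is hard rather than a proof, and it should not be spliced in as if it established the statement. If you want to go beyond the paper, the concrete next step you sketch (certifying infeasibility of a bounded-radius forced neighborhood by constraint propagation or integer programming) is reasonable, but as you note yourself, the missing ingredient is a lemma guaranteeing that some fixed radius suffices independently of $v$; without that, the local system can always be extended consistently and the method proves nothing. Absent such a lemma, the honest status of the statement is exactly what the paper assigns it: a conjecture.
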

Moreover, in the literature several families of edge-girth-regular graphs are described, yielding upper bounds for $n(k,g,\lambda).$ We think it is worthwhile to investigate the relationships between different extremal edge-girth-regular graphs (e.g.\ large common subgraphs, systematic ways to transform one graph into the other and so on). We hope that a careful analysis of the new graphs that we discovered in the current paper can also lead to new constructions or infinite families and perhaps even shed more light on the cage problem.  Finally, we expect that our overview of the best existing bounds on $n(k,g,\lambda)$, which was largely missing from earlier literature, will also motivate other researchers to further reduce the gaps.

\section*{Acknowledgements}
We thank Tibo Van den Eede and Robert Jajcay for interesting discussions about edge-girth-regular graphs. Jan Goedgebeur is supported by Internal Funds of KU Leuven and an FWO grant with grant number G0AGX24N. Jorik Jooken is supported by an FWO grant with grant number 1222524N. The computational resources and services used in this work were provided by the VSC (Flemish Supercomputer Center), funded by the Research Foundation - Flanders (FWO) and the Flemish Government – department EWI.

\bibliographystyle{abbrv}
\bibliography{ref}

\begin{thebibliography}{10}

\bibitem{AL22}
G.~Araujo-Pardo and D.~Leemans.
\newblock Edge-girth-regular graphs arising from biaffine planes and {S}uzuki
  groups.
\newblock {\em Discrete Math.}, 345(10):112991, 2022.

\bibitem{BGM11}
G.~Brinkmann, J.~Goedgebeur, and B.~D. McKay.
\newblock Generation of cubic graphs.
\newblock {\em Discrete Math. Theor. Comput. Sci.}, 13(2), 2011.

\bibitem{BMS95}
G.~Brinkmann, B.~D. McKay, and C.~Saager.
\newblock The smallest cubic graphs of girth nine.
\newblock {\em Combin. Probab. Comput.}, 4(4):317--329, 1995.

\bibitem{BCN89}
A.~E. Brouwer, A.~M. Cohen, and A.~Neumaier.
\newblock {\em Distance-regular graphs}, volume~18 of {\em Ergebnisse der
  Mathematik und ihrer Grenzgebiete (3) [Results in Mathematics and Related
  Areas (3)]}.
\newblock Springer-Verlag, Berlin, 1989.

\bibitem{CGJ23}
S.~Cambie, J.~Goedgebeur, and J.~Jooken.
\newblock The maximum number of connected sets in regular graphs.
\newblock {\em arXiv preprint arXiv:2311.00075}, 2023.

\bibitem{HOG}
K.~Coolsaet, S.~D'hondt, and J.~Goedgebeur.
\newblock House of {G}raphs 2.0: A database of interesting graphs and more.
\newblock {\em Discrete Appl. Math.}, 325:97--107, 2023. Available at
  \url{https://houseofgraphs.org/}.

\bibitem{CJRS12}
K.~Coolsaet, P.~D. Johnson, Jr., K.~J. Roblee, and T.~D. Smotzer.
\newblock Some extremal problems for edge-regular graphs.
\newblock {\em Ars Combin.}, 105:411--418, 2012.

\bibitem{DFJR21}
A.~Z. Drglin, S.~Filipovski, R.~Jajcay, and T.~Raiman.
\newblock Extremal edge-girth-regular graphs.
\newblock {\em Graphs and Combin.}, 37(6):2139--2154, 2021.

\bibitem{D24}
L.~Droogendijk.
\newblock {\em Private communication}, 2024.

\bibitem{EJ08}
G.~Exoo and R.~Jajcay.
\newblock Dynamic cage survey.
\newblock {\em Electron. J. Combin.}, DS16:48, 2008.

\bibitem{EMNN11}
G.~Exoo, B.~D. McKay, W.~Myrvold, and J.~Nadon.
\newblock Computational determination of {$(3,11)$} and {$(4,7)$} cages.
\newblock {\em J. Discrete Algorithms}, 9(2):166--169, 2011.

\bibitem{HR20}
D.~Holt and G.~Royle.
\newblock A census of small transitive groups and vertex-transitive graphs.
\newblock {\em J. Symbolic Comput.}, 101:51--60, 2020.

\bibitem{JKM18}
R.~Jajcay, G.~Kiss, and {\v{S}}.~Miklavi\v{c}.
\newblock Edge-girth-regular graphs.
\newblock {\em European J. Combin.}, 72:70--82, 2018.

\bibitem{JFJ17}
T.~B. Jajcayov\'{a}, S.~Filipovski, and R.~Jajcay.
\newblock Counting cycles in graphs with small excess.
\newblock In {\em Selected topics in graph theory and its applications},
  volume~14 of {\em Lect. Notes Semin. Interdiscip. Mat.}, pages 17--36. Semin.
  Interdiscip. Mat. (S.I.M.), Potenza, 2017.

\bibitem{KMS22}
G.~Kiss, {\v{S}}.~Miklavi\v{c}, and T.~Sz{\H{o}}nyi.
\newblock A stability result for girth-regular graphs with even girth.
\newblock {\em J. Graph Theory}, 100(1):163--181, 2022.

\bibitem{MMN98}
B.~D. McKay, W.~Myrvold, and J.~Nadon.
\newblock Fast backtracking principles applied to find new cages.
\newblock In {\em Proceedings of the {N}inth {A}nnual {ACM}-{SIAM} {S}ymposium
  on {D}iscrete {A}lgorithms ({S}an {F}rancisco, {CA}, 1998)}, pages 188--191.
  ACM, New York, 1998.

\bibitem{MP14}
B.~D. McKay and A.~Piperno.
\newblock Practical graph isomorphism, {II}.
\newblock {\em J. Symbolic Comput.}, 60:94--112, 2014.

\bibitem{M99}
M.~Meringer.
\newblock Fast generation of regular graphs and construction of cages.
\newblock {\em J. Graph Theory}, 30(2):137--146, 1999.

\bibitem{P23}
I.~Porups{\'a}nszki.
\newblock On edge-girth-regular graphs: lower bounds and new families.
\newblock {\em arXiv preprint arXiv:2305.17014}, 2023.

\bibitem{P24}
P.~Poto{\v{c}}nik.
\newblock Census of pentavalent arc-transitive graphs, 2024. Available at
  \url{https://graphsym.net/}.

\bibitem{PV22}
P.~Poto{\v{c}}nik and J.~Vidali.
\newblock Cubic vertex-transitive graphs of girth six.
\newblock {\em Discrete Math.}, 345(3):112734, 2022.

\bibitem{PSV13}
P.~Poto\v{c}nik, P.~Spiga, and G.~Verret.
\newblock Cubic vertex-transitive graphs on up to 1280 vertices.
\newblock {\em J. Symbolic Comput.}, 50:465--477, 2013.

\bibitem{R19}
G.~Royle.
\newblock {\em Private communication}, 2019.

\bibitem{S63}
H.~Sachs.
\newblock Regular graphs with given girth and restricted circuits.
\newblock {\em J. London Math. Soc.}, 38:423--429, 1963.

\bibitem{ST85}
D.~D. Sleator and R.~E. Tarjan.
\newblock Self-adjusting binary search trees.
\newblock {\em J. Assoc. Comput. Mach.}, 32(3):652--686, 1985.

\bibitem{W82}
P.~K. Wong.
\newblock Cages---a survey.
\newblock {\em J. Graph Theory}, 6(1):1--22, 1982.

\bibitem{YSZ23}
F.~Yang, Q.~Sun, and C.~Zhang.
\newblock The edge-girth-regularity of {W}enger graphs.
\newblock {\em arXiv preprint arXiv:2311.04401}, 2023.

\end{thebibliography}

\newpage
\section*{Appendix}\label{sec: appendix}

\appendix
\section{Tables}\label{app:tables}
\begin{table}[h]
    \centering
    \begin{tabular}{|c| c | c | c  c | c c |}
\hline
\thead{$k$} & \thead{$g$} & \thead{$\lambda$} & \thead{Literature:\\$n(k,g,\lambda) \geq$} & \thead{Current paper:\\$n(k,g,\lambda) \geq$}  & \thead{Current paper:\\$n(k,g,\lambda) \leq$} & \thead{Literature:\\$n(k,g,\lambda) \leq$}\\
\hline
3&3&2&4~\cite[Th.\ 2.3]{DFJR21}&4&4 (1 graph) &4~\cite[Prop.\ 3.1]{JKM18}\\
\hline
3&4&2&8~\cite[Th.\ 2.3]{DFJR21}&8&8 (1 graph) &8~\cite[Prop.\ 3.2]{JKM18}\\
3&4&4&6~\cite[Th.\ 2.3]{DFJR21}&6&6 (1 graph) &6~\cite[Prop.\ 3.2]{JKM18}\\
\hline
3&5&2&20~\cite[Prop.\ 3.3]{JKM18}&20&20 (1 graph) &20~\cite[Prop.\ 3.3]{JKM18}\\
3&5&4&10~\cite{EJ08}&10&10 (1 graph) &10~\cite[Prop.\ 3.3]{JKM18}\\
\hline
3&6&2&18~\cite[Th.\ 2.3]{DFJR21}&\textbf{\textit{24}}&\textbf{24} (2 graphs) &\textbf{24}~\cite{DFJR21}\\
3&6&4&18~\cite[Th.\ 2.3]{DFJR21}&18&18 (1 graph) &18~\cite{JKM18}\\
3&6&6&16~\cite[Th.\ 2.3]{DFJR21}&16&16 (1 graph) &16~\cite{JKM18}\\
3&6&8&14~\cite{EJ08}&14&14 (1 graph) &14~\cite{EJ08}\\
\hline
3&7&2&28~\cite[Th.\ 2.3]{DFJR21}&\textit{42}&56 ($\geq$1 graph) &56~\cite{PV22}\\
3&7&$4$&28~\cite[Th.\ 2.3]{DFJR21}&28&28 (1 graph) &28~\cite{PV22}\\
3&7&6&28~\cite{EJ08}&\textit{42}&$\infty$&$\infty$\\
3&7&8&$\infty$~\cite[Prop.\ 2.5]{JKM18}&$\infty$&$\infty$&$\infty$\\
\hline
3&8&2&40~\cite[Th.\ 2.3]{DFJR21}&\textit{56}&64 ($\geq$1 graph) &64~\cite{PV22}\\
3&8&4&40~\cite[Th.\ 2.3]{DFJR21}&\textbf{\textit{48}}&\textbf{48} ($\geq$1 graph) &\textbf{48}~\cite{PV22}\\
3&8&6&40~\cite[Th.\ 2.3]{DFJR21}&\textbf{\textit{48}}&\textbf{48} ($\geq$1 graph) &\textbf{48}~\cite{PV22}\\
3&8&8&36~\cite[Th.\ 2.3]{DFJR21}&\textbf{\textit{40}}&\textbf{40} (1 graph) &\textbf{40}~\cite{PV22}\\
3&8&10&40~\cite[Th.\ 2.3]{DFJR21}&\textit{48}&$\infty$&$\infty$\\
3&8&12&36~\cite[Th.\ 2.3]{DFJR21}&\textit{48}&$\infty$&$\infty$\\
3&8&14&32~\cite[Th.\ 2.3]{DFJR21}&\textit{64}&$\infty$&$\infty$\\
3&8&16&30~\cite{EJ08}&30&30 (1 graph) &$30$~\cite{PV22}\\
\hline
\end{tabular}
    \caption{An overview of the best lower and upper bounds for $n(3,g,\lambda)$ for $g \leq 8.$}
    \label{tab:3RegBounds}
\end{table}

\begin{table}[h]
    \centering
    \begin{tabular}{|c| c | c | c  c | c c | }
\hline
\thead{$k$} & \thead{$g$} & \thead{$\lambda$} & \thead{Literature:\\$n(k,g,\lambda) \geq$} & \thead{Current paper:\\$n(k,g,\lambda) \geq$}  & \thead{Current paper:\\$n(k,g,\lambda) \leq$} & \thead{Literature:\\$n(k,g,\lambda) \leq$} \\
\hline
4&3&1&9~\cite[Prop.\ 3.1]{DFJR21}&9&9 (1 graph) &9~\cite{JKM18}\\
4&3&2&6~\cite[Th.\ 2.3]{DFJR21}&6&6 (1 graph) &6~\cite{JKM18}\\
4&3&3&5~\cite[Th.\ 2.3]{DFJR21}&5&5 (1 graph) &5~\cite{JKM18}\\
\hline
4&4&1&18~\cite{DFJR21}&18&18 (4 graphs) &18~\cite{DFJR21}\\
4&4&2&13~\cite{DFJR21}&13&13 (1 graph) &13~\cite{DFJR21}\\
4&4&3&14~\cite{DFJR21}&14&14 (1 graph) &14~\cite{DFJR21}\\
4&4&4&19~\cite{DFJR21}&\textit{21}&$\infty$&$\infty$\\
4&4&5&10~\cite[Th.\ 2.3]{DFJR21}&10&10 (1 graph) &10~\cite{DFJR21}\\
4&4&6&10~\cite[Th.\ 2.3]{DFJR21}&10&10 (1 graph) &10~\cite[Ex.\ 5.1]{KMS22}\\
4&4&$7 \leq \lambda \leq 8$&$\infty$~\cite[Th.\ 2.4]{KMS22}&$\infty$&$\infty$&$\infty$\\
4&4&9&8~\cite[Th.\ 2.3]{DFJR21}&8&8 (1 graph) &8~\cite{DFJR21}\\
\hline
4&5&1&25~\cite[Th.\ 2.3]{DFJR21}&\textbf{\textit{30}}&\textbf{\textit{30}} (1 graph) &13\,500~\cite[Th.\ 4.5]{JKM18}\\
4&5&2&25~\cite[Th.\ 2.3]{DFJR21}&\textbf{\textit{30}}&\textbf{\textit{30}} ($\geq$1 graph) &$\infty$\\
4&5&$3$&25~\cite[Th.\ 2.3]{DFJR21}&\textit{30}&\textit{55} ($\geq$1 graph)&$\infty$\\
4&5&$4$&25~\cite[Th.\ 2.3]{DFJR21}&\textit{30}&$\infty$&$\infty$\\
4&5&5&21~\cite[Th.\ 2.3]{DFJR21}&\textbf{\textit{24}}&\textbf{\textit{24}} (1 graph) &$\infty$\\
4&5&6&\textbf{20}~\cite[Th.\ 2.3]{DFJR21}&\textbf{20}&\textbf{\textit{20}} (1 graph) &$\infty$\\
4&5&7&20~\cite{EJ08}&\textit{35}&$\infty$&$\infty$\\
4&5&8&20~\cite{EJ08}&\textit{65}&$\infty$&$\infty$\\
4&5&9&$\infty$~\cite[Prop.\ 2.5]{JKM18}&$\infty$&$\infty$&$\infty$\\
\hline
4&6&1&39~\cite[Th.\ 2.3]{DFJR21}&\textit{57}&\textit{84} ($\geq$1 graph)&1\,658\,880~\cite[Th.\ 4.5]{JKM18}\\
4&6&2&39~\cite[Th.\ 2.3]{DFJR21}&\textit{51}&\textit{96} ($\geq$1 graph)&$\infty$\\
4&6&3&38~\cite[Th.\ 2.3]{DFJR21}&\textit{45}&90 ($\geq$1 graph)&60~\cite{D24}\\
4&6&4&39~\cite[Th.\ 2.3]{DFJR21}&\textit{45}&\textit{60} ($\geq$1 graph)&$\infty$\\
4&6&5&39~\cite[Th.\ 2.3]{DFJR21}&\textit{42}&\textit{81} ($\geq$1 graph)&$\infty$\\
4&6&6&37~\cite[Th.\ 2.3]{DFJR21}&\textit{40}&\textit{64} ($\geq$1 graph)&$\infty$\\
4&6&$7$&36~\cite[Th.\ 2.3]{DFJR21}&\textit{39}&\textit{60} ($\geq$1 graph)&$\infty$\\
4&6&$8$&36~\cite[Th.\ 2.3]{DFJR21}&\textit{39}&\textit{48} ($\geq$1 graph)&$\infty$\\
4&6&9&\textbf{35}~\cite[Th.\ 2.3]{DFJR21}&\textbf{35}&\textbf{\textit{35}} (1 graph) &$\infty$\\
4&6&10&36~\cite[Th.\ 2.3]{DFJR21}&\textit{39}&\textit{48} ($\geq$1 graph)&$\infty$\\
4&6&11&36~\cite[Th.\ 2.3]{DFJR21}&36&\textit{42} ($\geq$1 graph) &$\infty$\\
4&6&12&34~\cite[Th.\ 2.3]{DFJR21}&\textit{35}&\textit{40} ($\geq$1 graph) &$\infty$\\
4&6&$13$&33~\cite[Th.\ 2.3]{DFJR21}&\textit{36}&\textit{60} ($\geq$1 graph) &$\infty$\\
4&6&$14$&33~\cite[Th.\ 2.3]{DFJR21}&\textit{36}&$\infty$&$\infty$\\
4&6&15&32~\cite[Th.\ 2.3]{DFJR21}&\textit{33}&\textit{40} ($\geq$1 graph) &$\infty$\\
4&6&16&33~\cite[Th.\ 2.3]{DFJR21}&33&\textit{36} ($\geq$1 graph) &$\infty$\\
4&6&17&33~\cite[Th.\ 2.3]{DFJR21}&33&$\infty$&$\infty$\\
4&6&18&31~\cite[Th.\ 2.3]{DFJR21}&\textbf{\textit{32}}&\textbf{32} (1 graph)  &\textbf{32}~\cite[Ex.\ 5.6]{KMS22}\\
4&6&$19 \leq \lambda \leq 20$&30~\cite[Th.\ 2.3]{DFJR21}&\textit{36}&$\infty$&$\infty$\\
4&6&21&29~\cite[Th.\ 2.3]{DFJR21}&\textbf{\textit{30}}&\textbf{30} (1 graph) &\textbf{30}~\cite[Ex.\ 5.4]{KMS22}\\
4&6&22&30~\cite[Th.\ 2.3]{DFJR21}&\textit{39}&$\infty$&$\infty$\\
4&6&23&30~\cite[Th.\ 2.3]{DFJR21}&\textit{42}&$\infty$&$\infty$\\
4&6&24&28~\cite[Th.\ 2.3]{DFJR21}&28&28 (1 graph) &28~\cite[Ex.\ 5.2]{KMS22}\\
4&6&$25 \leq \lambda \leq 26$&$\infty$~\cite[Th.\ 2.4]{KMS22}&$\infty$&$\infty$&$\infty$\\
4&6&27&26~\cite{EJ08}&26&26 (1 graph) &26~\cite{EJ08}\\
\hline
\end{tabular}
    \caption{An overview of the best lower and upper bounds for $n(4,g,\lambda)$ for $g \leq 6.$}
    \label{tab:4RegBounds}
\end{table}

\begin{table}[h]
    \centering
    \begin{tabular}{|c| c | c | c  c | c c |}
\hline
\thead{$k$} & \thead{$g$} & \thead{$\lambda$} & \thead{Literature:\\$n(k,g,\lambda) \geq$} & \thead{Current paper:\\$n(k,g,\lambda) \geq$}  & \thead{Current paper:\\$n(k,g,\lambda) \leq$} & \thead{Literature:\\$n(k,g,\lambda) \leq$}\\
\hline
5&3&2&\textbf{12}~\cite[Lem. 1]{CJRS12}&\textbf{12}&\textbf{\textit{12}} (1 graph) &$\infty$\\
5&3&4&6~\cite[Th.\ 2.3]{DFJR21}&6&6 (1 graph) &6~\cite{YSZ23}\\
\hline
5&4&2&16~\cite[Th.\ 2.3]{DFJR21}&\textit{24}&\textit{32} ($\geq$1 graph) &$\infty$\\
5&4&4&\textbf{16}~\cite[Th.\ 2.3]{DFJR21}&\textbf{16}&\textbf{\textit{16}} (1 graph) &$\infty$\\
5&4&6&16~\cite[Th.\ 2.3]{DFJR21}&\textit{20}&$\infty$&$\infty$\\
5&4&8&14~\cite[Th.\ 2.3]{DFJR21}&\textit{20}&$\infty$&$\infty$\\
5&4&10&16~\cite[Th.\ 2.3]{DFJR21}&\textit{20}&$\infty$&$\infty$\\
5&4&12&12~\cite[Th.\ 2.3]{DFJR21}&12&12 (1 graph) &12~\cite[Ex.\ 5.1]{KMS22}\\
5&4&14&$\infty$~\cite[Th.\ 2.4]{KMS22}&$\infty$&$\infty$&$\infty$\\
5&4&16&10~\cite[Th.\ 2.3]{DFJR21}&10&10 (1 graph) &10~\cite[Cor.\ 3.6]{P23}\\
\hline
5&5&2&40~\cite[Th.\ 2.3]{DFJR21}&\textit{44}&\textit{60} ($\geq$1 graph) &60~\cite{D24}\\
5&5&4&38~\cite[Th.\ 2.3]{DFJR21}&38&\textit{66} ($\geq$1 graph) &$\infty$\\
5&5&6&36~\cite[Th.\ 2.3]{DFJR21}&36&$\infty$&$\infty$\\
5&5&8&34~\cite[Th.\ 2.3]{DFJR21}&34&\textit{36} ($\geq$1 graph) &$\infty$\\
5&5&10&32~\cite[Th.\ 2.3]{DFJR21}&32&$\infty$&$\infty$\\
5&5&12&32~\cite[Th.\ 4.4]{AL22}&32&32 ($\geq$1 graph) &32~\cite[Th.\ 2.2]{AL22}\\
5&5&14&30~\cite{EJ08}&\textit{42}&$\infty$&$\infty$\\
5&5&16&$\infty$~\cite[Prop.\ 2.5]{JKM18}&$\infty$&$\infty$&$\infty$\\
\hline
\end{tabular}
    \caption{An overview of the best lower and upper bounds for $n(5,g,\lambda)$ for $g \leq 5.$}
    \label{tab:5RegBounds}
\end{table}

\begin{table}[h]
    \centering
    \begin{tabular}{|c| c | c | c  c | c c |}
\hline
\thead{$k$} & \thead{$g$} & \thead{$\lambda$} & \thead{Literature:\\$n(k,g,\lambda) \geq$} & \thead{Current paper:\\$n(k,g,\lambda) \geq$}  & \thead{Current paper:\\$n(k,g,\lambda) \leq$} & \thead{Literature:\\$n(k,g,\lambda) \leq$}\\
\hline
6&3&1&\textbf{15}~\cite[Prop.\ 3.1]{DFJR21}&\textbf{15}&\textbf{\textit{15}} (1 graph) &$\infty$\\
6&3&2&\textbf{12}~\cite[Lem. 1]{CJRS12}&\textbf{12}&\textbf{\textit{12}} (1 graph) &16~\cite[Th.\ 4.1]{JKM18}\\
6&3&3&\textbf{9}~\cite[Th.\ 2.3]{DFJR21}&\textbf{9}&\textbf{\textit{9}} (1 graph) &$\infty$\\
6&3&4&\textbf{8}~\cite[Th.\ 2.3]{DFJR21}&\textbf{8}&\textbf{\textit{8}} (1 graph) &$\infty$\\
6&3&5&7~\cite[Th.\ 2.3]{DFJR21}&7&7 (1 graph) &7~\cite{YSZ23}\\
\hline
6&4&1&32~\cite[Prop.\ 3.2]{DFJR21}&32&\textit{40} ($\geq$1 graph) &$\infty$\\
6&4&2&24~\cite[Th.\ 3.5]{P23}&\textit{30}&\textit{36} ($\geq$1 graph) &320~\cite[Th.\ 5.3]{JKM18}\\
6&4&3&20~\cite[Th.\ 2.3]{DFJR21}&\textit{28}&\textit{32} ($\geq$1 graph) &$\infty$\\
6&4&4&19~\cite[Th.\ 2.3]{DFJR21}&\textit{25}&\textit{27} ($\geq$1 graph) &$\infty$\\
6&4&5&20~\cite[Th.\ 2.3]{DFJR21}&\textbf{\textit{24}}&\textbf{\textit{24}} ($\geq$1 graph) &$\infty$\\
6&4&6&20~\cite[Th.\ 2.3]{DFJR21}&\textbf{\textit{24}}&\textbf{\textit{24}} ($\geq$1 graph) &36~\cite[Th.\ 4.2]{JKM18}\\
6&4&7&20~\cite[Th.\ 2.3]{DFJR21}&\textit{24}&\textit{36} ($\geq$1 graph) &$\infty$\\
6&4&8&18~\cite[Th.\ 2.3]{DFJR21}&\textit{22}&\textit{26} ($\geq$1 graph) &$\infty$\\
6&4&9&\textbf{20}~\cite[Th.\ 2.3]{DFJR21}&\textbf{20}&\textit{\textbf{20}} (2 graphs) &$\infty$\\
6&4&10&18~\cite[Th.\ 2.3]{DFJR21}&\textbf{\textit{20}}&\textbf{\textit{20}} (5 graphs) &$\infty$\\
6&4&11&20~\cite[Th.\ 2.3]{DFJR21}&20&$\infty$&$\infty$\\
6&4&12&17~\cite[Th.\ 2.3]{DFJR21}&\textbf{\textit{20}}&\textbf{\textit{20}} ($\geq$1 graph) &$\infty$\\
6&4&13&16~\cite[Th.\ 2.3]{DFJR21}&\textbf{\textit{20}}&\textbf{20} ($\geq$1 graph)&\textbf{20}~\cite{D24}\\
6&4&14&16~\cite[Th.\ 2.3]{DFJR21}&\textbf{\textit{18}}&\textbf{\textit{18}} (1 graph) &$\infty$\\
6&4&15&16~\cite[Th.\ 2.3]{DFJR21}&\textit{20}&$\infty$&$\infty$\\
6&4&16&\textbf{15}~\cite[Th.\ 2.3]{DFJR21}&\textbf{15}&\textbf{\textit{15}} (1 graph) &$\infty$\\
6&4&17&\textbf{16}~\cite[Th.\ 2.3]{DFJR21}&\textbf{16}&\textbf{16} (1 graph) &$\infty$\\
6&4&18&16~\cite[Th.\ 2.3]{DFJR21}&\textit{22}&$\infty$&$\infty$\\
6&4&19&16~\cite[Th.\ 2.3]{DFJR21}&\textit{24}&$\infty$&$\infty$\\
6&4&20&14~\cite[Th.\ 2.3]{DFJR21}&14&14 (1 graph) &14~\cite[Ex.\ 5.1]{KMS22}\\
6&4&$21 \leq \lambda \leq 24$&$\infty$~\cite[Th.\ 2.4]{KMS22}&$\infty$&$\infty$&$\infty$\\
6&4&25&12~\cite[Th.\ 2.3]{DFJR21}&12&12 (1 graph) &12~\cite[Cor.\ 3.6]{P23}\\
\hline
6&5&1&65~\cite[Th.\ 2.3]{DFJR21}&65&$\infty$&$\infty$\\
6&5&2&60~\cite[Th.\ 2.3]{DFJR21}&60&$\infty$&7620~\cite[Th.\ 5.3]{JKM18}\\
6&5&3&60~\cite[Th.\ 2.3]{DFJR21}&60&$\infty$&110~\cite{D24}\\
6&5&4&60~\cite[Th.\ 2.3]{DFJR21}&60&$\infty$&910~\cite[Th.\ 5.3]{JKM18}\\
6&5&5&57~\cite[Th.\ 2.3]{DFJR21}&57&$\infty$&$\infty$\\
6&5&6&60~\cite[Th.\ 2.3]{DFJR21}&60&$\infty$&$\infty$\\
6&5&$7 \leq \lambda \leq 9$&55~\cite[Th.\ 2.3]{DFJR21}&55&$\infty$&$\infty$\\
6&5&10&52~\cite[Th.\ 2.3]{DFJR21}&52&$\infty$&$\infty$\\
6&5&11&55~\cite[Th.\ 2.3]{DFJR21}&55&$\infty$&$\infty$\\
6&5&$12 \leq \lambda \leq 14$&50~\cite[Th.\ 2.3]{DFJR21}&50&$\infty$&$\infty$\\
6&5&15&47~\cite[Th.\ 2.3]{DFJR21}&47&57 ($\geq$1 graph)&57~\cite{D24}\\
6&5&16&50~\cite[Th.\ 2.3]{DFJR21}&50&$\infty$&$\infty$\\
6&5&$17 \leq \lambda \leq 19$&45~\cite[Th.\ 2.3]{DFJR21}&45&$\infty$&$\infty$\\
6&5&20&\textbf{42}~\cite[Th.\ 2.3]{DFJR21}&\textbf{42}&\textbf{\textit{42}} ($\geq$1 graph) &$\infty$\\
6&5&21&45~\cite[Th.\ 2.3]{DFJR21}&45&$\infty$&$\infty$\\
6&5&22&40~\cite{EJ08}&40&40 (1 graph) &40~\cite{KMS22}\\
6&5&$23 \leq \lambda \leq 24$&40~\cite{EJ08}&\textit{45}&$\infty$&$\infty$\\
6&5&25&$\infty$~\cite[Prop.\ 2.5]{JKM18}&$\infty$&$\infty$&$\infty$\\
\hline
\end{tabular}
    \caption{An overview of the best lower and upper bounds for $n(6,g,\lambda)$ for $g \leq 5.$}
    \label{tab:6RegBounds}
\end{table}

\captionsetup[figure]{name=Table}
\begin{figure}[h!]
  \begin{subfigure}[b]{0.33\textwidth}
    \renewcommand{\arraystretch}{0.9}
    \centering
    \begin{tabular}{|c| c | c | c |}
\hline
\thead{$k$} & \thead{$g$} & \thead{$\lambda$} & \thead{Current paper:\\$n_{vt}(3,g,\lambda) =$}\\
\hline
3&9&2&108\\
3&9&4&408\\
3&9&6&60\\
3&9&8&60\\
\hline
3&10&2&160\\
3&10&4&120\\
3&10&10&112\\
3&10&12&110\\
3&10&16&90\\
3&10&20&80\\
\hline
3&11&2&1012\\
3&11&4&1012\\
3&11&8&506\\
\hline
3&12&2&384\\
3&12&4&384\\
3&12&6&512\\
3&12&8&272\\
3&12&10&240\\
3&12&12&256\\
3&12&14&256\\
3&12&16&234\\
3&12&18&256\\
3&12&20&234\\
3&12&22&216\\
3&12&24&204\\
3&12&26&192\\
3&12&28&182\\
3&12&32&162\\
3&12&34&168\\
3&12&36&162\\
\hline
3&13&26&384\\
\hline
3&14&2&1092\\
3&14&6&1092\\
3&14&10&1008\\
3&14&14&768\\
3&14&16&1008\\
3&14&28&504\\
3&14&42&512\\
3&14&44&406\\
3&14&56&506\\
\hline
3&15&10&1248\\
3&15&12&1280\\
3&15&20&864\\
3&15&30&816\\
3&15&32&620\\
\hline
3&16&32&1250\\
3&16&72&1240\\
3&16&80&1012\\
3&16&96&1008\\
3&16&104&1280\\
\hline
\end{tabular}
    \caption{}
    \label{tab:3RegVTBounds}
  \end{subfigure}%
  \begin{subfigure}[b]{0.33\textwidth}
            \renewcommand{\arraystretch}{0.9}
    \centering
    \begin{tabular}{|c| c | c | c |}
\hline
\thead{$k$} & \thead{$g$} & \thead{$\lambda$} & \thead{Current paper:\\$n_{at}(4,g,\lambda) =$}\\
\hline
4&7&1&252\\
4&7&2&224\\
4&7&3&273\\
4&7&4&224\\
4&7&7&160\\
4&7&9&91\\
4&7&14&80\\
\hline
4&8&1&612\\
4&8&2&320\\
4&8&3&336\\
4&8&4&252\\
4&8&5&384\\
4&8&6&288\\
4&8&7&512\\
4&8&8&243\\
4&8&9&320\\
4&8&10&240\\
4&8&11&336\\
4&8&12&288\\
4&8&13&256\\
4&8&14&256\\
4&8&15&204\\
4&8&16&240\\
4&8&17&224\\
4&8&18&256\\
4&8&19&240\\
4&8&20&162\\
4&8&21&192\\
4&8&22&256\\
4&8&23&256\\
4&8&24&126\\
4&8&25&192\\
4&8&26&160\\
4&8&27&168\\
4&8&28&144\\
4&8&29&192\\
4&8&30&160\\
4&8&33&160\\
4&8&36&140\\
4&8&37&128\\
4&8&39&128\\
4&8&40&135\\
4&8&41&128\\
4&8&43&128\\
4&8&45&128\\
4&8&48&110\\
4&8&60&100\\
4&8&65&96\\
\hline
\end{tabular}
    \caption{}
    \label{tab:4RegATBoundsPart1}
  \end{subfigure}%
  \begin{subfigure}[b]{0.33\textwidth}
    \renewcommand{\arraystretch}{0.9}
    \centering
    \begin{tabular}{|c| c | c | c |}
\hline
\thead{$k$} & \thead{$g$} & \thead{$\lambda$} & \thead{Current paper:\\$n_{at}(4,g,\lambda) =$}\\
\hline
4&9&6&504\\
4&9&9&600\\
4&9&15&546\\
4&9&18&506\\
4&9&19&504\\
4&9&21&285\\
4&9&27&320\\
4&9&36&320\\
\hline
4&10&50&625\\
4&10&60&546\\
4&10&61&640\\
4&10&65&640\\
4&10&67&640\\
4&10&70&624\\
4&10&71&640\\
4&10&80&546\\
4&10&90&512\\
4&10&95&576\\
4&10&108&420\\
4&10&110&432\\
\hline
\end{tabular}
    \caption{}
    \label{tab:4RegATBoundsPart2}
  \end{subfigure}
  \caption{An overview of $n_{vt}(3,g,\lambda)$ for $9 \leq g \leq 16$ and $n_{at}(4,g,\lambda)$ for $7 \leq g \leq 10.$}
\label{tab:highGirth}
\end{figure}
\captionsetup[figure]{name=Figure}

\end{document}